\numberwithin{equation}{section}
\newtheorem{thm}{Theorem}[section]
\newtheorem{prop}[thm]{Proposition}
\newtheorem{lem}[thm]{Lemma}
\newtheorem{defn}[thm]{Definition}
\theoremstyle{remark}
\newcommand{\R}{{\mathbb R}}
\newcommand{\Z}{{\mathbb Z}}
\newcommand{\N}{{\mathbb N}}
\newcommand{\LR}[1]{{\langle {#1} \rangle }}
\newcommand{\F}{\mathcal{F}}
\newcommand{\ha}{\widehat}
\newcommand{\supp}{\operatorname{supp}}
\title[The ZK equation in high d: Small initial data of critical regularity]{The
  Zakharov-Kuznetsov equation in high dimensions:
  Small initial data of critical regularity}
\author[S.~Herr]{Sebastian Herr}
\address[Sebastian Herr]{Universit\"{a}t Bielefeld\\
Fakult\"{a}t f\"{u}r Mathematik\\
Postfach 10 01 31\\
33501 Bielefeld\\
Germany}
\email{herr@math.uni-bielefeld.de}
\author[S.  Kinoshita]{Shinya Kinoshita}
\address[Shinya Kinoshita]{Universit\"{a}t Bielefeld\\
Fakult\"{a}t f\"{u}r Mathematik\\
Postfach 10 01 31\\
33501 Bielefeld\\
Germany}
\email{kinoshita@math.uni-bielefeld.de}
\subjclass[2010]{35Q53, 35A01}
\keywords{global well-posedness, scattering, Zakharov-Kuznetsov equation}
\thanks{
Financial support by the
  German Research Foundation (DFG) through the CRC 1283 ``Taming uncertainty and profiting from
  randomness and low regularity in analysis, stochastics and their
  applications'' is acknowledged.
}
\begin{document}
\begin{abstract}
The Zakharov-Kuznetsov equation
in spatial dimension $d\geq 5$ is considered. The Cauchy problem is
shown to be globally well-posed for small initial data in critical
spaces and it is proved that solutions scatter to free solutions as
$t \to \pm \infty$. The proof is based on i) novel endpoint non-isotropic
Strichartz estimates which are derived from the $(d-1)$-dimensional
Schr\"odinger equation, ii) transversal bilinear restriction
estimates,
and iii) an interpolation argument in critical
function spaces. Under an additional radiality assumption, a similar result is obtained in dimension $d=4$.
\end{abstract}
\maketitle

\section{Introduction}\label{sec:intro}
This paper is concerned with the Zakharov--Kuznetsov equation
\begin{equation}\label{eq:ZK}
  \begin{split}
  \partial_t u+\partial_{x_1}\Delta u={}&\partial_{x_1}u^2 \phantom{u_0} \text{ in
  }\R\times \R^d\\
  u(0,\cdot)={}&u_0  \phantom{\partial_{x_1}u^2 } \text{ on
  } \R^d
\end{split}
\end{equation}
where $d\geq 2$, $u=u(t,x)$, $(t,x) =(t,x_1,\ldots, x_d) \in \R
\times \R^d$,  $u$ is real-valued, and $\Delta$ denotes the Laplacian with respect to $x$.

The Zakharov--Kuznetsov equation was introduced in \cite{ZK74} as a model for  propagation of ion-sound waves in magnetic fields.
The Zakharov--Kuznetsov equation can be seen as a multidimensional extension of the well-known Korteweg--de Vries (KdV) equation.
In contrast to the KdV equation, the Zakharov--Kuznetsov equation is not completely integrable, but possesses two invariants,
\[
M(u) := \int_{\R^d} u^2 dx, \quad
E(u) :=  \int_{\R^d} \frac{1}{2} |\nabla u|^2 + \frac{1}{3} u^3 dx.
\]

In the following, $H^s(\R^d)$ denotes the standard $L^2$-based inhomogeneous Sobolev space
and $B^s_{2,1}(\R^d)$ is the Besov refinement, and the dotted versions their homogeneous counterparts, see below for definitions.
The scale-invariant regularity threshold for \eqref{eq:ZK} is $s_c=\frac{d-4}{2}$.

Before we state our main results, let us briefly summarize the progress which has been made regarding the well-posedness problem associated to \eqref{eq:ZK}. In the two-dimensional case, Faminski\u{\i} \cite{Fa95} established global well-posedness in the energy space $H^1(\R^2)$. Later, Linares and Pastor \cite{LP09} proved local well-posedness in $H^s(\R^2)$ for $s>3/4$, before Gr\"{u}nrock and Herr \cite{GH14} and Molinet and Pilod
\cite{MP15} showed local well-posedness for $s > 1/2$. Recently, the second author \cite{Ki2019} proved local well-posedness for $s>-1/4$.
In dimension $d=3$, Linares and Saut \cite{LS09} obtained local well-posedness in $H^s(\R^3)$ for $s >
9/8$. Ribaud and Vento \cite{RV12-ZK} proved local well-posedness for $s > 1$ and in $B_2^{1,1}(\R^3)$. The global well-poseness in $H^s(\R^3)$ for $s>1$ was obtained by Molinet and Pilod in \cite{MP15}.
Recently, in dimensions $d
\geq 3$, local well-posedness in $H^s(\R^d)$ in the full subcritical range $s>s_c$ was proved in \cite{HK20arxiv}, which implies global well-posedness in $H^1(\R^d)$ if $3
\leq d \leq 5$ and in $L^2(\R^3)$. We refer the reader to these papers for a more thorough account on the Zakharov--Kuznetsov equation, and more references.

In the present paper, we address the problem of global well-posedness and scattering for small initial data in critical spaces.
By well-posedness we mean existence of a (mild) solution, uniqueness of solutions (in some subspace) and (locally Lipschitz) continuous dependence of solutions on the initial data.
We say that a global solution $u \in C(\R,H^s(\R^d))$ of \eqref{eq:ZK} scatters as $t \to \pm \infty$, if there exist $u_\pm \in H^s(\R^d)$ such that
\[\| u(t)-e^{tS}u_\pm\|_{H^s(\R^d)}\to 0 \quad (t \to \pm \infty).\]
Here, $e^{tS}$ denotes the unitary group generated by the skew-adjoint linear
operator $S = -\partial_{x_1} \Delta$, so that $e^{tS}u_\pm$ solves the linear homogeneous equation.

Our first main result covers small data in dimension $d=5$.
\begin{thm}\label{thm:d5}
For $d =5$, the Cauchy problem \eqref{eq:ZK} is globally well-posed for small initial data in $B^{s_c}_{2,1}(\R^5)$, and solutions scatter as  $t\to\pm \infty$. The same result holds in $\dot{B}^{s_c}_{2,1}(\R^d)$.
\end{thm}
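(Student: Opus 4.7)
The plan is to implement a contraction mapping argument in a critical function space adapted to the linear ZK propagator $e^{tS}$. Following the Koch--Tataru framework for critical dispersive problems, I would build the solution space $X^{s_c}$ from the $U^2_S$-atoms along the $S$-flow, with frequency-dyadic blocks summed in $\ell^1$ to match the Besov regularity $B^{s_c}_{2,1}(\R^5)$ at $s_c = 1/2$. The Duhamel formulation
\[
u(t) = e^{tS} u_0 + \int_0^t e^{(t-s)S}\,\partial_{x_1}(u^2)(s)\, ds
\]
reduces the Cauchy problem to the bilinear nonlinear estimate
\[
\|\partial_{x_1}(uv)\|_{N^{s_c}} \lesssim \|u\|_{X^{s_c}} \|v\|_{X^{s_c}},
\]
where $N^{s_c}$ is the corresponding nonlinearity space dual/adjoint to $V^2_S$. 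Since $V^2_S$-functions admit strong limits as $t\to \pm\infty$, scattering is automatic once the fixed point is found in this space.

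A Littlewood--Paley decomposition with dyadic frequencies $N_1, N_2$ for the two input factors and $N_3$ for the output reduces the bilinear estimate to the three usual interaction types: (a) high-low, $N_1 \sim N_3 \gg N_2$; (b) high-high to low, $N_1 \sim N_2 \gg N_3$; and (c) comparable frequencies. In each case one must produce a frequency-localized bound with the correct dependence on $N_1,N_2,N_3$ and then sum it via the $\ell^1$ Besov structure in the output frequency together with Cauchy--Schwarz in the input frequencies.

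The key ingredients mirror the three items in the abstract. First, non-isotropic Strichartz estimates: freezing $\xi_1$ in a dyadic band of size $N$, the phase $\xi_1|\xi|^2$ behaves, in the transverse variable $x' = (x_2,\ldots,x_d)$, as the phase of a rescaled Schr\"odinger equation in $d-1$ spatial dimensions. For $d=5$ the transverse Schr\"odinger lives in four spatial dimensions, where the Keel--Tao endpoint pair $(q,r)=(2,4)$ is admissible. Combined with an $L^\infty_{x_1}$ bound and Bernstein in $\xi_1$, this produces novel mixed-norm estimates of the form $\|P_N e^{tS} f\|_{L^\infty_{x_1}L^2_t L^4_{x'}} \lesssim N^{1/2}\|f\|_{L^2}$. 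Second, transversal bilinear restriction estimates in the spirit of Tao--Vargas--Vega exploit the curvature and transversality properties of the ZK characteristic surface $\tau = \xi_1|\xi|^2$ to provide the decisive $L^2_{t,x}$ bilinear bound with a gain in $\min(N_1,N_2)$; this is what closes case (b). Third, an interpolation argument between $U^2_S$ and $V^2_S$-based estimates transfers sharp $U^2_S$ bounds obtained at function-space endpoints into $V^2_S$ bounds with at worst logarithmic losses, which are then absorbed by the $\ell^1$ summation of the Besov norm.

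The main obstacle is case (b) at the critical endpoint regularity: a naive application of Strichartz loses a full power of $\min(N_1,N_2)^{1/2}$ and fails to close. The rescue comes from the transversal bilinear restriction estimate, which in turn requires a careful angular decomposition to excise the thin parallel directions (where transversality of the group velocities degenerates) and a subsequent almost-orthogonality argument to reassemble the pieces; controlling modulation and reassembling through the interpolation step in the critical function spaces is the technical heart of the argument. Once the frequency-localized bilinear estimate is established, Banach's fixed point in a small ball of $X^{s_c}$ yields global existence, uniqueness, locally Lipschitz dependence on data, and scattering via the $V^2_S$ limit. The homogeneous statement in $\dot B^{s_c}_{2,1}(\R^d)$ follows by running exactly the same estimates over all dyadic scales.
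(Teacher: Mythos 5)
Your proposal imports the heavy machinery that the paper reserves for the $d\ge 6$ case (Theorem \ref{thm:d6}), namely $U^p_S/V^p_S$ spaces, the transversal bilinear $L^2$ estimate, and the $U^2$--$V^2$ interpolation, whereas the paper's proof of Theorem \ref{thm:d5} is an essentially elementary Strichartz argument. For $d=5$ the paper defines a simple $\ell^1$-Besov style norm
\[
\| u \|_{Z^{1/2}}\sim \sum_{N}N^{1/2}\bigl(\|P_N u\|_{L^\infty_t L^2_x}+\|P_N D_{x_1}^{1/2}u\|_{L^2_t L^4_{x'}L^2_{x_1}}\bigr),
\]
and closes the fixed point using only the non-isotropic endpoint Strichartz estimate of Theorem \ref{th:Strichartz} together with the Kato--Ponce and Bernstein inequalities; no bilinear transversality, no $V^2$-limit argument, and no interpolation in critical function spaces are needed. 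Your claim that ``a naive application of Strichartz loses a full power of $\min(N_1,N_2)^{1/2}$ and fails to close'' is not correct for $d=5$: the Strichartz bound produces exactly $N_{\min}^{1/2}$ on the right-hand side, and this is precisely what is required to close the high-high-to-low case under $\ell^1$-summation, since $N_3^{1/2}N_{\min}^{1/2}=N_3\lesssim N_{\max}\sim N_1^{1/2}N_2^{1/2}$.

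Two further concrete problems. First, the Strichartz estimate you write, $\|P_N e^{tS}f\|_{L^\infty_{x_1}L^2_t L^4_{x'}}\lesssim N^{1/2}\|f\|_{L^2}$, has the mixed norm in the wrong order and with the wrong gain: the correct estimate has $L^2_{x_1}$ innermost and the derivative $D_{x_1}^{1/q}$ on the left, i.e.\ $\|D_{x_1}^{1/2}e^{tS}f\|_{L^2_t L^4_{x'}L^2_{x_1}}\lesssim\|f\|_{L^2}$. This structure is what makes Plancherel in $\xi_1$ followed by Minkowski work; with $L^\infty_{x_1}$ outermost, Minkowski is not available and the estimate as written is not justified. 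Second, the interpolation step (Proposition \ref{prop:inter}) in the paper is only valid for $d\ge 6$: the exponents $N_{\max}^{-1/2+1/(d-3)}$ and $N_{\min}^{(d-3)/2-2/(d-3)}$ both vanish at $d=5$, so there is no room for the $\varepsilon$-gain needed to sum the dyadic pieces. So the route you propose is both heavier than necessary and would not close at $d=5$ in the form you describe. For the actual $d=5$ proof you should discard the $U^p/V^p$ and transversal bilinear machinery and instead prove the bilinear estimate $\|\mathcal{I}(\partial_{x_1}(u_1u_2))\|_{Z^{1/2}}\lesssim\|u_1\|_{Z^{1/2}}\|u_2\|_{Z^{1/2}}$ directly via the mixed-norm Strichartz estimate, the Kato--Ponce fractional Leibniz rule in $x_1$, and Bernstein.
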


In dimensions $d\geq 6$, we can extend this to Sobolev regularity.
\begin{thm}\label{thm:d6}
For $d \geq 6$, the Cauchy problem \eqref{eq:ZK} is globally well-posed for small initial data in $H^{s_c}(\R^d)$, and solutions scatter as $t\to\pm \infty$. The same result holds in $\dot{H}^{s_c}(\R^d)$.
\end{thm}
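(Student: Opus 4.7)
The plan is to close the Cauchy problem by a fixed-point contraction in a critical function space adapted to the linear ZK propagator $e^{tS}$. Following the $U^p$-$V^p$ calculus, I would introduce a resolution space $\dot Z^{s_c}$ whose norm is modelled on $\|e^{-tS}u\|_{U^2_t \dot H^{s_c}}$ (with an inhomogeneous variant for the $H^{s_c}$ statement), so that the associated ``nonlinear'' space is essentially the dual built on $V^2_t\dot H^{s_c}$. Standard $U^2$-$V^2$ duality then reduces the contraction to the bilinear estimate
\[
\Bigl\|\int_0^t e^{(t-s)S}\partial_{x_1}(uv)(s)\,ds\Bigr\|_{\dot Z^{s_c}} \lesssim \|u\|_{\dot Z^{s_c}}\|v\|_{\dot Z^{s_c}},
\]
which after dyadic Littlewood--Paley decomposition and duality becomes a trilinear estimate on frequency-localised pieces $u_{N_1}, v_{N_2}, w_{N_3}$.

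The linear input is a family of non-isotropic endpoint Strichartz estimates. Indeed, after a partial Fourier transform in $x_1$ one has $e^{it\xi_1|\xi|^2}=e^{it\xi_1^3}\,e^{it\xi_1|\xi'|^2}$, so $e^{tS}$ acts, for each frozen $\xi_1$, as a $(d-1)$-dimensional Schr\"odinger evolution in $x'=(x_2,\ldots,x_d)$ with time rescaled by $\xi_1$. Applying the Keel--Tao endpoint Strichartz estimate in dimension $d-1\geq 5$ and undoing the rescaling yields bounds of the form
\[
\|e^{tS}f\|_{L^2_t L^{\frac{2(d-1)}{d-3}}_{x'}L^2_{x_1}} \lesssim \|f\|_{L^2(\R^d)},
\]
together with a range of further non-isotropic mixed-norm estimates. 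These are the ingredients that make the critical exponent $s_c=\frac{d-4}{2}$ accessible.

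With the linear estimates in hand, the heart of the argument is the dyadic bilinear estimate. High-low interactions $N_1\gg N_2$ are handled by H\"older's inequality together with the non-isotropic Strichartz bounds, placing the derivative $\partial_{x_1}$ on the high-frequency factor. The delicate case is the high-high to low interaction $N_1\sim N_2\gg N_3$. There the Fourier supports of $u_{N_1}$ and $v_{N_2}$ lie on angularly separated caps of the characteristic set $\tau=\xi_1|\xi|^2$, and the transversal bilinear $L^2$ restriction estimate provides an $L^2_{t,x}$ bound on products $P_{N_3}(u_{N_1}v_{N_2})$ with a quantitative gain of a power of $N_3/N_1$. Interpolating this $L^2$ bilinear estimate against the non-isotropic Strichartz bounds on the $U^p$-$V^p$ scale then lifts the estimate to $V^2_S$ with only an arbitrarily small loss that is absorbed by the transversality gain.

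The principal obstacle is controlling the derivative loss from $\partial_{x_1}$: the output carries weight $N_3^{s_c}$ while the inputs contribute $N_1^{-s_c}N_2^{-s_c}$ and the differential operator contributes a factor $N_1$. For $d\geq 6$ one has $s_c\geq 1$, which leaves enough margin that the transversal bilinear gain overcomes the derivative loss and the remaining triple sum converges under ordinary $\ell^2$-summability; this is the structural reason why unadorned Sobolev spaces (rather than the Besov refinement needed in the $d=5$ case) suffice here. Once the bilinear estimate is proved, the small-data contraction, global existence, and scattering as $t\to\pm\infty$ all follow from standard $U^2$-$V^2$ arguments applied to the Duhamel formula, with the scattering states constructed as the $L^2$ limits of $e^{-tS}u(t)$.
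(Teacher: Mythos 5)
Your overall framework---$U^2$-$V^2$ duality, non-isotropic Strichartz estimates inherited from the $(d-1)$-dimensional Schr\"odinger flow, a transversal bilinear $L^2$ estimate, and interpolation between the two---is exactly the strategy of the paper. However, there are three concrete gaps between your sketch and a working proof.

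First, you never decompose in the $\xi_1$-frequency. The paper's bilinear transversal estimate (Lemma~\ref{lem:trans}) has the form
\[
\|P_{N_3} (e^{tS}f_{N_1,\lambda_1}\, e^{tS}g_{N_2,\lambda_2}) \|_{L_t^2 L_x^2} \lesssim \Big(\frac{N_{\min}^{d-1}}{\max\{\lambda_1,\lambda_2\}N_{\max}}\Big)^{1/2}\|f\|_{L^2}\|g\|_{L^2},
\]
so the gain depends crucially on the $\xi_1$-frequency $\lambda$, not just on the full frequency $N$. Without the extra $Q_\lambda$-localisation the estimate is vacuous when $|\xi_1|$ is small; the paper's Proposition~\ref{prop:inter} and the subsequent trilinear estimate are built on this double dyadic decomposition, together with the identity
$\lambda_{\max}\lesssim N_{\max}$, $\lambda_{\min}\lesssim N_{\min}$, to convert $\lambda$-gains into $N$-gains.

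Second, your claim that high-low interactions $N_1\gg N_2$, $N_3\sim N_1$ are closed by H\"older plus non-isotropic Strichartz alone is not correct at critical Sobolev regularity. If you hold the low factor in $L^\infty_t L^{(d-1)/2}_{x'}L^\infty_{x_1}$ and Bernstein back to $L^2$, the cost is exactly $N_2^{s_c}$, while the $D_{x_1}^{1/q}$-weighted Strichartz norms on the two high factors balance against the $\partial_{x_1}$ only when $q=2$, i.e.\ at the endpoint, which requires $U^2_S$ rather than $V^2_S$ and in any case produces a bound that is exactly scale-invariant with no summable factor $(N_2/N_1)^\varepsilon$. That is precisely the phenomenon forcing the Besov refinement in $d=5$. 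For $d\geq 6$ the paper produces the needed $(N_{\min}/N_{\max})^\varepsilon$ gain by routing \emph{all} frequency interactions---including high-low---through the transversal bilinear estimate interpolated with Strichartz, never through Strichartz alone. Your sentence ``this is the structural reason why unadorned Sobolev spaces suffice'' is correct, but your earlier claim that high-low is easy undercuts it.

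Third, the geometry of the high-high-low case is subtler than ``angularly separated caps.'' With $\xi\approx-\eta$ both of magnitude $N_1$, the gradients $\nabla\varphi(\xi)=(3\xi_1^2+|\xi'|^2,\,2\xi_1\xi')$ and $\nabla\varphi(\eta)$ can be nearly equal, so the two \emph{high} pieces need not be transverse at all. The paper's Lemma~\ref{lem2.5} resolves this by showing that, after the double dyadic localisation, at least one of the pairs $(u_{N_1,\lambda_1},u_{N_2,\lambda_2})$ or $(u_{N_2,\lambda_2},u_{N_3,\lambda_3})$ satisfies $|\nabla\varphi(\xi)-\nabla\varphi(\eta)|\gtrsim\lambda_{\max}N_{\max}$; in the high-high-low configuration it is typically the pairing of a high piece with the \emph{low} output piece that supplies the transversality. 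Without a lemma of this type, the bilinear estimate cannot be invoked, so this is not an optional detail but the key geometric input that makes the argument close.

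Once you add the $Q_\lambda$-decomposition, replace the Strichartz-only treatment of high-low by the same transversal/interpolation mechanism, and supply a dichotomy in the spirit of Lemma~\ref{lem2.5} identifying the transverse pair, your outline lines up with the paper's proof of Proposition~\ref{prop:d6}.
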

Note that in $d=6$ this result includes the energy space $\dot{H}^1(\R^6)$.

If we restrict to  initial data which is radial in the last $(d-1)$ variables (see below for definitions), we obtain small data global well-posedness and scattering in the critical Sobolev spaces for any dimension $d\geq 4$.

\begin{thm}\label{thm:d4rad}
For $d \geq 4$, the Cauchy problem \eqref{eq:ZK} is globally well-posed for small data in $H^{s_c}_{\mathrm{rad}}(\R^d)$, and solutions scatter as $t\to \pm \infty$.  The same result holds for radial data in $\dot{H}^{s_c}_{\mathrm{rad}}(\R^d)$.
\end{thm}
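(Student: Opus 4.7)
My plan is to run the same critical-space iteration scheme that underlies Theorems \ref{thm:d5} and \ref{thm:d6}, but inside the subspace of functions that are radial in the last $d-1$ variables. The linear operator $S=-\partial_{x_1}\Delta$ commutes with the natural $SO(d-1)$-action on $(x_2,\ldots,x_d)$, and the nonlinearity $\partial_{x_1}(u^2)$ preserves radial symmetry in those variables, so the entire evolution leaves this subspace invariant and it is enough to establish the required estimates there.

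First, I would set up $U^p_S$, $V^p_S$-type function spaces adapted to the propagator $e^{tS}$ and introduce the associated solution space $F^{s_c}_{\mathrm{rad}}$ and nonlinearity space $N^{s_c}_{\mathrm{rad}}$. As usual, the Picard iteration for the Duhamel formulation
\[
u(t) = e^{tS}u_0 + \int_0^t e^{(t-s)S}\partial_{x_1}\bigl(u(s)^2\bigr)\,ds
\]
reduces to a bilinear estimate of the form
\[
\bigl\|\partial_{x_1}(uv)\bigr\|_{N^{s_c}_{\mathrm{rad}}} \lesssim \|u\|_{F^{s_c}_{\mathrm{rad}}}\, \|v\|_{F^{s_c}_{\mathrm{rad}}}.
\]
The smallness of the data then makes the nonlinear map a contraction on a small ball, and the embedding $F^{s_c}_{\mathrm{rad}}\hookrightarrow C(\R;H^{s_c}_{\mathrm{rad}})$ together with the $V^2_S$-structure converts the bound on $\int e^{-sS}\partial_{x_1}(u^2)\,ds$ into the existence of scattering states $u_\pm\in H^{s_c}_{\mathrm{rad}}$.

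Second, after a Littlewood--Paley decomposition in $|\xi|$ (and a further localization in $\xi_1$) the bilinear estimate splits into dyadic blocks. Regimes in which the two inputs are transversally separated in Fourier are handled by the bilinear Fourier restriction estimates from item (ii) of the abstract, which are unaffected by radiality. The decisive regime is the near-diagonal transversal interaction in low dimensions, which is precisely the regime that forced Theorem \ref{thm:d5} into the Besov refinement and that is out of reach in $d=4$ without further input. The new ingredient is an endpoint \emph{radial} Strichartz estimate for the $(d-1)$-dimensional Schr\"odinger equation, transferred to $e^{tS}$ via the $\xi_1$-fibration used to derive the transversal Strichartz estimates of item (i): for radial data on $\R^{d-1}$ with $d-1\geq 3$, improved angular/Strichartz-type bounds go beyond the Keel--Tao range, and one such improvement should close the dyadic sums in the diagonal regime at the scaling-critical Sobolev, rather than Besov, regularity.

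The main obstacle I foresee is the precise formulation of the radial endpoint Strichartz estimate at the critical $L^2$-based scaling, especially in $d=4$ where the transversal flow lives on $\R^3$ and the endpoint exponent is genuinely borderline. Coupling this radial refinement with the bilinear restriction estimates in the near-diagonal regime, and summing the resulting dyadic contributions without losing the $H^{s_c}$ gain in favor of $B^{s_c}_{2,1}$, is the technical heart of the argument. The homogeneous version in $\dot H^{s_c}_{\mathrm{rad}}$ should follow by the same proof after replacing all Littlewood--Paley projectors with their homogeneous counterparts.
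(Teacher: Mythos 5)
Your plan matches the paper's proof: the key new ingredient is indeed a transfer of Cho--Lee type radial Schr\"odinger Strichartz estimates on $\R^{d-1}$ to $e^{tS}$ via the $\xi_1$-fibration (Theorem~\ref{thm:RadStrichartz}), combined with the bilinear transversal estimates (here refined to include an additional frequency localization $R_M$ in $|\xi'|$, Lemma~\ref{lem:Radtrans}) and an interpolation argument in $V^2_S$, culminating in the dyadic bilinear bound of Proposition~\ref{prop:bil4drad}. You correctly identify both the overall $U^p/V^p$ contraction/scattering framework and the borderline $d=4$ (i.e.\ $\R^3$) Schr\"odinger exponent issue; the main technical detail you don't spell out is the case analysis on the relative sizes of the $|\xi'|$-localizations $M_i$ versus $N_1$ together with modulation-based bounds in the regimes where the radial gain and transversality are individually insufficient.
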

As the proof shows, the radiality assumption can be weakened to an angular regularity assumption, but we do not pursue this.
One of the most interesting special cases here is $d=4$, when $s_c=0$, hence the result covers the radial $L^2$ space.

The main idea of this paper is to combine a new set of non-isotropic Strichartz estimates with the bilinear transversal estimate and an interpolation argument in critical function spaces.

The paper is structured as follows: In Subsection \ref{subsec:not} we introduce notation. In Section \ref{sec:str} we derive Strichartz type estimates which are based on the well-known Strichartz estimates for the $(d-1)$-dimensional Schr\"odinger equation and allow us to treat the case $d=5$. In Section \ref{sec:d6} we combine this with the bilinear transversal estimate and an interpolation argument, which leads to a proof of Theorem \ref{thm:d6}. Finally, in Section \ref{sec:rad} we discuss an variation of these ideas under the additional radiality assumption and a proof of Theorem \ref{thm:d4rad}.

\subsection{Notation}\label{subsec:not}

We write $ x'=(x_2,\ldots,x_d)$, $D_{x_j}=-i \partial_j$,
$D=(-\Delta)^\frac12$, $|\nabla_{x'}|^s=\F_{x'}^{-1} |\xi'|^s \F_{x'}$, and $\langle \nabla_{x'}\rangle^s=\F_{x'}^{-1} \langle \xi'\rangle^s \F_{x'}$. Here and in the sequel we denote the Fourier transform of $u$
in time, space, and the first spatial variable, by $\F_t u$, $\F_{x}
u$, $\F_{x_1} u$, and  $\F_{x'}$, respectively.
$\F_{t, x} u = \ha{u}$ denotes the Fourier transform of $u$ in time
and space. Choose a non-negative bump function $\psi \in C_c^\infty(\R)$ supported in the interval $(1/2,2)$ with the property that $\sum_{N \in 2^\Z} \psi(r/N)=1$ for $r>0$, and set $\psi_N=\psi(\cdot/N)$.
For $N$, $\lambda \in 2^{\Z}$, we define (spatial) frequency projections $P_N$, $Q_{\lambda}$ as the Fourier multipliers with symbols $\psi_{N}(|\xi| )$, $\psi_{\lambda}(|\xi_1| )$, and $ \psi_{M}(|\xi'| )$, respectively,
where $(\tau,\xi)=(\tau,\xi_1,\xi')=(\tau,\xi_1,\ldots,\xi_d) \in \R \times \R^d$ are temporal and spatial frequencies.
In addition, we define \[P_{\leq 1}=\sum_{1\geq N \in 2^\Z}P_N, \quad Q_{\leq 1}:= \sum_{1\geq  \lambda \in 2^\Z} Q_\lambda, \text{ and } R_{\leq 1}:= \sum_{1\geq  M \in 2^\Z} R_M.
\]
As usual, the Sobolev space $H^s(\R^d)$ is defined as the completion of the Schwartz functions with respect to the norm
\[
\|f\|_{H^s(\R^d)}=\Big(\int_{\R^d} \LR{\xi}^{2s} |\widehat{f}(\xi)|^2 d\xi\Big)^{\frac12},
\]
and the (smaller) Besov space $B^s_{2,1}(\R^d)$ as the completion of the Schwartz functions $\mathcal{S}(\R^{d})$ with respect to the norm
\[
\|f\|_{B^s_{2,1}(\R^d)}=\|P_{\leq 1}f\|_{L^2} +\sum_{N\in 2^{\N} }N^s\|P_Nf\|_{L^2}.
\]
Similarly, for $s\geq 0$, the homogeneous Sobolev space $\dot{H}^s(\R^d)$ is defined as the completion of the Schwartz functions with respect to the norm
\[
\|f\|_{\dot{H}^s(\R^d)}=\Big(\int_{\R^d} |\xi|^{2s} |\widehat{f}(\xi)|^2 d\xi\Big)^{\frac12},
\]
and the homogeneous Besov space $\dot{B}^s_{2,1}(\R^d)$ as the completion of the Schwartz functions with respect to the norm
\[
\|f\|_{\dot{B}^s_{2,1}(\R^d)}=\sum_{N\in 2^{\Z} }N^s\|P_Nf\|_{L^2}.
\]

The radial subspaces $H_{\mathrm{rad}}^s(\R^d)$ and $\dot{H}^s_{\mathrm{rad}}(\R^d)$ are defined by the requirement that $f(x_1,x') = f (x_1, y')$ if $ |x'|=|y'|$, i.e.\ for fixed $x_1$, the functions are radial in $x'$.

Finally,  the Duhamel operator is denoted by
\begin{equation*}
\mathcal{I}(F)(t) := \int_0^t e^{(t-t')S} F(t') d t'.
\end{equation*}

\section{Strichartz estimates and the proof of Theorem \ref{thm:d5}}\label{sec:str}

For $d \geq 2$, we say $(q, r)$ is \emph{$(d-1)$-admissible} if
\[2 \leq q,r \leq \infty, \; 2/q = (d-1)(1/2-1/r), \; (d, q, r) \not= (3,2,\infty).\]
\begin{thm}\label{th:Strichartz}
Let $d \geq 2$ and  $(q_1, r_1)$, $(q_2,r_2)$ be $(d-1)$-admissible. Then, we have
\begin{align}
& \| D_{x_1}^{\frac1q} e^{t S} f \|_{L_t^{q_1} L_{x'}^{r_1} {{L}_{x_1}^2}} \lesssim \| f \|_{L_x^2}.
\label{eq:homogeneousStrichartz}\\
& \|D_{x_1}^{\frac{1}{q_1}+\frac{1}{q_2}}\mathcal{I} F\|_{L_t^{q_1} L_{x'}^{r_1} {{L}_{x_1}^2}} \lesssim \|F\|_{L_t^{q_2'} L_{x'}^{r_2'} {{L}_{x_1}^2}},\label{eq:retardedStrichartz}
\end{align}
where $1/q_2'=1-1/q_2$ and $1/r_2'=1-1/r_2$.
\end{thm}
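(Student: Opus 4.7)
The plan is to take the partial Fourier transform in $x_1$ and thereby reduce matters to the classical Strichartz estimates for the $(d-1)$-dimensional Schr\"odinger equation, which are then recombined via Plancherel and Minkowski's inequality.

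A direct computation using $S = -\partial_{x_1}^3 - \partial_{x_1}\Delta_{x'}$ gives
\[
\F_{x_1}[e^{tS}f](\xi_1, x') = e^{it\xi_1^3}\bigl[e^{-it\xi_1 \Delta_{x'}}\F_{x_1}f(\xi_1, \cdot)\bigr](x').
\]
The unimodular phase $e^{it\xi_1^3}$ does not affect moduli, and for each fixed $\xi_1\neq 0$ the operator $e^{-it\xi_1\Delta_{x'}}$ is a $(d-1)$-dimensional Schr\"odinger propagator run at rescaled time. Applying the rescaling $\tau = t\xi_1$ inside the classical homogeneous and (Keel--Tao double-endpoint) inhomogeneous Strichartz estimates on $\R^{d-1}$ yields, for each $\xi_1$, the parametric bounds
\[
|\xi_1|^{1/q_1}\|e^{-it\xi_1\Delta_{x'}}g\|_{L^{q_1}_tL^{r_1}_{x'}} \lesssim \|g\|_{L^2_{x'}},
\]
\[
|\xi_1|^{1/q_1+1/q_2}\Big\|\int_0^t e^{-i(t-s)\xi_1\Delta_{x'}}G(s,\cdot)\,ds\Big\|_{L^{q_1}_tL^{r_1}_{x'}} \lesssim \|G\|_{L^{q_2'}_sL^{r_2'}_{x'}},
\]
the weight $|\xi_1|^{1/q_1+1/q_2}$ coming out naturally from the Jacobians of the two rescalings.

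For \eqref{eq:homogeneousStrichartz} I would apply Plancherel in $x_1$ to convert $\|\cdot\|_{L^2_{x_1}}$ into $\|\cdot\|_{L^2_{\xi_1}}$, use Minkowski to move $L^2_{\xi_1}$ to the outside of $L^{q_1}_tL^{r_1}_{x'}$ (allowed because $q_1,r_1\geq 2$), invoke the homogeneous parametric bound above fiberwise, and conclude via Plancherel once more. For \eqref{eq:retardedStrichartz} the same scheme applies, but now one must also compare $\|\F_{x_1}F\|_{L^2_{\xi_1}L^{q_2'}_tL^{r_2'}_{x'}}$ with $\|F\|_{L^{q_2'}_tL^{r_2'}_{x'}L^2_{x_1}}$; crucially, the exponents $q_2',r_2'\leq 2$ allow Minkowski to go in the favorable direction on this side as well. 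Schematically,
\begin{align*}
\|D_{x_1}^{1/q_1+1/q_2}\mathcal{I}F\|_{L^{q_1}_tL^{r_1}_{x'}L^2_{x_1}}
&\leq \bigl\||\xi_1|^{1/q_1+1/q_2}\F_{x_1}\mathcal{I}F\bigr\|_{L^2_{\xi_1}L^{q_1}_tL^{r_1}_{x'}}\\
&\lesssim \|\F_{x_1}F\|_{L^2_{\xi_1}L^{q_2'}_tL^{r_2'}_{x'}}\\
&\leq \|F\|_{L^{q_2'}_tL^{r_2'}_{x'}L^2_{x_1}},
\end{align*}
where the first inequality uses Plancherel and the $q_1,r_1\geq 2$ Minkowski exchange, the middle step integrates the squared parametric retarded estimate over $\xi_1$, and the last step uses the $q_2',r_2'\leq 2$ Minkowski exchange together with Plancherel.

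The main technical work consists in careful bookkeeping of the powers of $|\xi_1|$ produced by the Schr\"odinger rescalings $\tau = t\xi_1$, $\sigma = s\xi_1$ and in verifying that the Minkowski exchanges required on the output and input sides both go in the favorable direction; I expect no substantive obstacle since the admissibility condition places $q_1,r_1\geq 2$ and dually $q_2',r_2'\leq 2$. The endpoint nature of the inequality is handled entirely within the $(d-1)$-dimensional Schr\"odinger setting by Keel--Tao, and the excluded pair $(d,q,r)=(3,2,\infty)$ in the statement corresponds exactly to the excluded Schr\"odinger endpoint on $\R^{d-1}=\R^2$.
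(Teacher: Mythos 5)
Your proposal is correct and follows essentially the same route as the paper: take the partial Fourier transform in $x_1$, observe that the propagator acts fiberwise as a rescaled $(d-1)$-dimensional Schr\"odinger flow (the phase $e^{it\xi_1^3}$ being harmless), apply the Keel--Tao homogeneous and retarded Strichartz estimates on $\R^{d-1}$ with the Jacobian of $\tau = t\xi_1$ producing the weight $|\xi_1|^{1/q_1+1/q_2}$, and recombine via Plancherel and Minkowski in $\xi_1$. Your additional remarks on the favorable directions of the Minkowski exchanges (using $q_1,r_1\geq 2$ on the output side and $q_2',r_2'\leq 2$ on the input side) make explicit a step the paper leaves compressed, and your identification of the excluded pair with the $\R^2$ Schr\"odinger endpoint is also correct.
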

\begin{proof}
Let $\Delta_{x'} = \sum_{j=2}^d \partial_{x_j}^2$.
For fixed $\xi_1 \in \R$, define $V_{\xi_1}(t) f(x') := (e^{-i t \xi_1 \Delta_{x'}} f)(x')$.
Since $V_{\xi_1}(t/\xi_1) = e^{it \Delta_{x'}}$, for $f \in \mathcal{S}(\R^{d-1})$ and $F \in\mathcal{S}(\R \times \R^{d-1})$, the Strichartz estimates of Schr\"{o}dinger equations in $\R^{d-1}$ imply
\begin{align}
\label{eq:Strichartzd-1}
\| |\xi_1|^{\frac{1}{q_1}} V_{\xi_1}(t) f \|_{L_t^{q_1} L_{x'}^{r_1}}
  \lesssim{}& \| f \|_{L_{x'}^2},
  \\
\Bigl\| \int_0^t |\xi_1|^{\frac{1}{q_1}+\frac{1}{q_2}} V_{\xi_1}(t-t') F(t') d t' \Bigr\|_{L_t^{q_1} L_{x'}^{r_1}}
  \lesssim{}& \| F\|_{L_t^{q_2'} L_{x'}^{r_2'}},
              \label{eq:Strichartzd-12}
\end{align}
see \cite[Theorem 1.2]{Keel-Tao} for details. We deduce from the Plancherel's Theorem, Minkowski's inequality and \eqref{eq:Strichartzd-1} that
\[
 \| D_{x_1}^{\frac1q} e^{t S} f \|_{L_t^q L_{x'}^r L_{x_1}^2}
 = \Bigl( \int_{\R} \| |\xi_1|^{\frac1q} V_{\xi_1} \F_{x_1} f  \|_{L_t^q L_{x'}^r}^{2} d \xi_1 \Bigr)^{\frac12} \lesssim \| f \|_{L_x^2},\]
which is \eqref{eq:homogeneousStrichartz}. Similarly, by \eqref{eq:Strichartzd-12},
\begin{align*}
 &\|D_{x_1}^{\frac{1}{q_1}+\frac{1}{q_2}} \mathcal{I}
   (F)\|_{L_t^{q_1} L_{x'}^{r_1} L_{x_1}^2} \\
  ={}& \Bigl( \int_{\R}
\Bigl\| \int_0^t |\xi_1|^{\frac{1}{q_1}+\frac{1}{q_2}} V_{\xi_1}(t-t') \F_{x_1}(F)(t') d t' \Bigr\|_{L_t^{q_1} L_{x'}^{r_1}}^2  d \xi_1 \Bigr)^{\frac12}\\
\lesssim{}& \|F\|_{L_t^{q_2'} L_{x'}^{r_2'} L_{x_1}^2},
\end{align*}
which is \eqref{eq:retardedStrichartz}.
\end{proof}

Now, we can complete the proof of Theorem \ref{thm:d5}. Recall that $d=5$ implies $s_c = 1/2$.

\begin{defn}
We define
\begin{align*}
   \| u \|_{Z^{\frac12}} :={}& \| P_{\leq 1}u \|_{L_t^{\infty} L_x^2} + \|P_{\leq 1} D_{x_1}^{\frac12} u\|_{L_t^2 L_{x'}^4 L_{x_1}^2} \\
  {}&+ \sum_{N \in 2^{\N}}N^{\frac12} \bigl( \| P_{N}u \|_{L_t^{\infty} L_x^2} + \|P_{N} D_{x_1}^{\frac12} u\|_{L_t^2 L_{x'}^4 L_{x_1}^2} \bigr), \\
 \| u \|_{\dot{Z}^{\frac12}} :={}& \sum_{N \in 2^{\Z}}N^{\frac12} \bigl( \| P_{N}u \|_{L_t^{\infty} L_x^2} + \|P_{N} D_{x_1}^{\frac12} u\|_{L_t^2 L_{x'}^4 L_{x_1}^2} \bigr),
\end{align*}
and the corresponding Banach spaces.
\end{defn}
By the standard argument involving the contraction mapping principle, it suffices to prove the following:
\begin{prop}
Let $d=5$. Then, we have
\[
\| \mathcal{I} (\partial_{x_1}(u_1u_2)) \|_{Z^{\frac12}} \lesssim \|u_1 \|_{Z^{\frac12}} \| u_2 \|_{Z^{\frac12}}, \quad
\| \mathcal{I} (\partial_{x_1}(u_1u_2)) \|_{\dot{Z}^{\frac12}} \lesssim \|u_1 \|_{\dot{Z}^{\frac12}} \| u_2 \|_{\dot{Z}^{\frac12}}.
\]
\end{prop}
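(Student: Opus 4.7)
The plan is to combine the endpoint Strichartz estimate from Theorem~\ref{th:Strichartz} with a Littlewood--Paley decomposition, H\"older's inequality, and Bernstein's inequality in $x_1$. Since $d-1=4$, the pair $(q,r)=(2,4)$ is $(d-1)$-admissible, and \eqref{eq:retardedStrichartz} yields both
\[
\|D_{x_1} \mathcal{I} F\|_{L^2_t L^4_{x'} L^2_{x_1}} \lesssim \|F\|_{L^2_t L^{4/3}_{x'} L^2_{x_1}} \quad \text{and} \quad \|D_{x_1}^{1/2} \mathcal{I} F\|_{L^\infty_t L^2_x} \lesssim \|F\|_{L^2_t L^{4/3}_{x'} L^2_{x_1}},
\]
the second coming from $(q_1,r_1)=(\infty,2)$, $(q_2,r_2)=(2,4)$. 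Using that $D_{x_1}^{-1/2}\partial_{x_1}$ equals $D_{x_1}^{1/2}$ composed with the Hilbert transform in $x_1$ (bounded on $L^{4/3}$), and that $\mathcal{I}$ commutes with $D_{x_1}^\alpha$, both block norms appearing in $\|P_N \mathcal{I}(\partial_{x_1}(u_1u_2))\|$ reduce to controlling $\|P_N D_{x_1}^{1/2}(u_1u_2)\|_{L^2_t L^{4/3}_{x'} L^2_{x_1}}$.

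I then decompose $u_j = \sum_{N_j} P_{N_j} u_j$ and split into the high-low regime $\max(N_1,N_2) \sim N$ and the high-high regime $N_1 \sim N_2 \gtrsim N$. Applying H\"older with $L^{4/3}_{x'} = L^4_{x'} \cdot L^2_{x'}$ and $L^2_{x_1} = L^2_{x_1} \cdot L^\infty_{x_1}$, after placing $D_{x_1}^{1/2}$ on the factor with larger $x_1$-frequency via $|\xi_1|^{1/2} \lesssim |\xi_{1,1}|^{1/2}+|\xi_{1,2}|^{1/2}$, one obtains
\[
\|P_N D_{x_1}^{1/2}(P_{N_1}u_1 \, P_{N_2}u_2)\|_{L^2_t L^{4/3}_{x'} L^2_{x_1}} \lesssim \|D_{x_1}^{1/2} P_{N_1}u_1\|_{L^2_t L^4_{x'} L^2_{x_1}} \cdot N_2^{1/2} \|P_{N_2}u_2\|_{L^\infty_t L^2_x},
\]
with the $N_2^{1/2}$ coming from Bernstein in $x_1$ (since $P_{N_2} u_2$ has $x_1$-frequency support in $\{|\xi_1|\lesssim N_2\}$); the symmetric assignment handles the opposite configuration.

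Setting $a_N := \|D_{x_1}^{1/2} P_N u_1\|_{L^2_t L^4_{x'} L^2_{x_1}}$ and $b_N := \|P_N u_2\|_{L^\infty_t L^2_x}$, the high-low contribution sums as $\sum_N N^{1/2} a_N \sum_{N_2 \lesssim N} N_2^{1/2} b_{N_2} \leq \|u_1\|_{Z^{1/2}} \|u_2\|_{Z^{1/2}}$. For high-high, $\sum_{N \leq N_2} N^{1/2} \sim N_2^{1/2}$ reduces the sum to $\sum_{N_2} N_2 a_{N_2} b_{N_2}$, bounded by $\bigl(\sup_{N_2} N_2^{1/2} a_{N_2}\bigr) \sum_{N_2} N_2^{1/2} b_{N_2} \leq \|u_1\|_{Z^{1/2}} \|u_2\|_{Z^{1/2}}$ using the $\ell^1$ (Besov) structure of $Z^{1/2}$. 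The low-frequency block $P_{\leq 1}$ is handled by a simpler variant of the same argument; the homogeneous estimate in $\dot Z^{1/2}$ follows by the identical scheme with the $P_{\leq 1}$ term omitted.

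The main obstacle I anticipate is the high-high summation, which is tight at the critical index $s_c = 1/2$: the Bernstein gain $N_2^{1/2}$ exactly balances the $\partial_{x_1}$ derivative loss and the outer $N^{1/2}$ weight, and only the $\ell^1$ Besov summation (not $\ell^2$) closes the estimate -- this is precisely why the theorem is stated in $B^{s_c}_{2,1}$ rather than in $H^{s_c}$. A secondary subtlety is justifying the fractional-Leibniz distribution of $D_{x_1}^{1/2}$ when one of the factors has very small $x_1$-frequency, which can be handled either by an additional $Q_\lambda$ decomposition in $\xi_1$ or by working directly on the Fourier side via the pointwise bound above.
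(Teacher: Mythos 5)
Your overall strategy coincides with the paper's: reduce via the retarded $(d-1)$-Strichartz estimate with the $4$-dimensional endpoint pair $(q,r)=(2,4)$ to controlling $\|P_{N_3} D_{x_1}^{1/2}(u_{N_1}u_{N_2})\|_{L^2_tL^{4/3}_{x'}L^2_{x_1}}$, then apply a fractional Leibniz rule plus Bernstein and sum the Littlewood--Paley pieces. The reduction of $D_{x_1}^{-1/2}\partial_{x_1}$ to $D_{x_1}^{1/2}$ modulo the Hilbert transform in $x_1$ is correct since only $L^2_{x_1}$ appears.

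However, your displayed bilinear estimate is written with Bernstein factor $N_2^{1/2}$, and this is \emph{not} sharp enough. The correct factor is $\min(N_1,N_2)^{1/2}$ (the paper writes $N_{\min}^{1/2}$), and the improvement comes from exactly the mechanism you describe in words but do not carry into the display: if the derivative is placed on the factor with the larger $\xi_1$-frequency, the \emph{other} factor has $\xi_1$-frequency bounded by both $N_1$ and $N_2$, so Bernstein in $x_1$ gives at most $\min(N_1,N_2)^{1/2}$. Your written justification (``$P_{N_2}u_2$ has $x_1$-frequency support in $\{|\xi_1|\lesssim N_2\}$'') only yields $N_2^{1/2}$, which loses this refinement. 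Concretely, with $N_2^{1/2}$ only, the Kato--Ponce term in which $D_{x_1}^{1/2}$ falls on the factor with the \emph{smaller} $\xi'$-frequency fails to sum in the low-high regime $N_1\ll N_2\sim N$: it produces
\[
\sum_{N} N^{1/2}\, b_{N} \sum_{N_1\lesssim N} N_2^{1/2} a_{N_1} \;\sim\; \sum_{N} N\, b_{N}\sum_{N_1\lesssim N} a_{N_1},
\]
which requires $\sum_N N b_N$, i.e.\ one full derivative more than $\|u_2\|_{Z^{1/2}}$ provides (and $\sum_{N_1\lesssim N}a_{N_1}$ is itself only controlled by $\sum N_1^{1/2}a_{N_1}$ after the low-frequency cut-off). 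Your summation treats only the high-low regime $N_1\sim N$, $N_2\lesssim N$ and the high-high regime, and the ``symmetric assignment'' covers the analogous configuration with $u_1,u_2$ swapped; the residual Kato--Ponce term just described is not covered by either and genuinely needs the $\min(N_1,N_2)^{1/2}$ gain, which in turn needs the $\xi_1$-paraproduct decomposition you mention at the end as a ``secondary subtlety.'' With that refinement in place, the argument closes in all three regimes and matches the paper's one-line ``Kato--Ponce plus Bernstein'' bound.
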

\begin{proof}
Let $N_{\max} = \max (N_1,N_2,N_3)$ and $N_{\min} = \min (N_1,N_2, N_3)$.
For any $N \in 2^\Z$, Theorem \ref{th:Strichartz} gives
\[
\| P_{N}\mathcal{I} (\partial_{x_1}(u_1u_2)) \||_{L_t^{\infty} L_x^2} +
\| P_{N}D_{x_1}^{\frac12} \mathcal{I} (\partial_{x_1}(u_1u_2)) \|_{L_t^2 L_{x'}^4 L_{x_1}^2} \lesssim
\|P_N D_{x_1}^{\frac12} (u_1 \, u_2) \|_{L_t^2 L_{x'}^{\frac43} L_{x_1}^2}.
\]
Further, we obtain
\begin{align*}
& \|P_{N_3} D_{x_1}^{\frac12} (u_{N_1} \, u_{N_2}) \|_{L_t^2 L_{x'}^{\frac43} L_{x_1}^2} \\
& \lesssim N_{\min}^{\frac12}\|D_{x_1}^{\frac12} u_{N_1} \|_{L_t^2 L_{x'}^4 L_{x_1}^2} \|u_{N_2} \|_{L_t^{\infty} L_x^2} + N_{\min}^{\frac12} \|u_{N_1} \|_{L_t^{\infty} L_x^2} \|D_{x_1}^{\frac12} u_{N_2} \|_{L_t^2 L_{x'}^4 L_{x_1}^2}
\end{align*}
from the Kato-Ponce inequality and the Bernstein inequality. This can be summed up both in the homogeneous and in the inhomogeneous version.
\end{proof}
This argument also implies the scattering claim, since it implies that the Duhamel integral converges to a free solutions as $t\to \pm \infty$. We omit the details of this standard argument.

\section{Transversal estimates and the proof of Theorem \ref{thm:d6}}\label{sec:d6}

\begin{lem}\label{lem:trans}
Let $d\geq 2$ and $f_{N_1,\lambda_1} = Q_{\lambda_1} P_{N_1} f$, $g_{N_2,\lambda_2} = Q_{\lambda_2} P_{N_2} g$. For all $\lambda_j,N_j\in 2^\Z$ such that
\[
|\nabla \varphi(\xi)-\nabla \varphi(\eta)|\gtrsim \max\{\lambda_1,\lambda_2\} N_{\max},
\]
for all $\xi \in \supp \widehat{f}_{N_1,\lambda_1}$, $\eta \in \supp \widehat{g}_{N_2,\lambda_2}$, it holds that
\begin{equation}\label{eq:transversest}
\|P_{N_3} (e^{tS}f_{N_1,\lambda_1} \, e^{tS}g_{N_2,\lambda_2}) \|_{L_t^2 L_x^2} \lesssim \Big(\frac{N_{\min}^{d-1}}{  \max\{\lambda_1,\lambda_2\} N_{\max}}\Big)^{\frac12} \|f_{N_1,\lambda_1}\|_{L^2}
\|g_{N_2,\lambda_2} \|_{L^2}.
\end{equation}
\end{lem}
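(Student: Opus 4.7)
The plan is a transversal bilinear $L^2$ argument via Plancherel and the coarea formula. Denote by $\varphi(\xi) = \xi_1|\xi|^2$ the symbol of $S$, so that $e^{tS}$ has spacetime Fourier transform concentrated on the characteristic surface $\tau = \varphi(\xi)$. Writing $F = \widehat{f}_{N_1,\lambda_1}$ and $G = \widehat{g}_{N_2,\lambda_2}$, the spacetime Fourier transform of the product $e^{tS} f_{N_1,\lambda_1}\cdot e^{tS} g_{N_2,\lambda_2}$ is, as a distribution,
\[
H(\tau, \xi) = \int F(\xi_1)\,G(\xi - \xi_1)\,\delta\bigl(\tau - \varphi(\xi_1) - \varphi(\xi - \xi_1)\bigr)\,d\xi_1,
\]
and by Plancherel it suffices to bound $\|\chi_{N_3}(\xi)\,H\|_{L^2_{\tau,\xi}}$, where $\chi_{N_3}$ localizes to $|\xi|\sim N_3$.

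Applying Cauchy--Schwarz to the $\xi_1$-integral and using Fubini in $(\tau, \xi)$ to collapse the resulting delta yields the standard reduction
\[
\|P_{N_3}(e^{tS} f_{N_1,\lambda_1}\, e^{tS} g_{N_2,\lambda_2})\|_{L^2_{t,x}}^2 \lesssim \Bigl(\sup_{\tau,\xi}\sigma(\tau,\xi)\Bigr)\,\|f_{N_1,\lambda_1}\|_{L^2}^2 \,\|g_{N_2,\lambda_2}\|_{L^2}^2,
\]
where the resonance measure is
\[
\sigma(\tau, \xi) := \int \mathbf{1}_{\supp F}(\xi_1)\,\mathbf{1}_{\supp G}(\xi - \xi_1)\,\delta\bigl(\tau - \varphi(\xi_1) - \varphi(\xi - \xi_1)\bigr)\,d\xi_1.
\]
The $L^2$ masses of $F$ and $G$ reappear after integrating the second factor in the Cauchy--Schwarz bound and using $\|F\|_{L^2}=\|f_{N_1,\lambda_1}\|_{L^2}$ via Plancherel in the spatial variables.

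The remaining and central step is to prove the pointwise bound $\sigma(\tau,\xi)\lesssim N_{\min}^{d-1} / (\max(\lambda_1,\lambda_2)\,N_{\max})$. The gradient of the resonance phase $\Phi_\xi(\xi_1) := \varphi(\xi_1) + \varphi(\xi - \xi_1)$ is $\nabla\varphi(\xi_1) - \nabla\varphi(\xi-\xi_1)$, which the hypothesis bounds below by $\max(\lambda_1,\lambda_2)\,N_{\max}$ on $\supp F \cap (\xi - \supp G)$. The coarea formula then gives
\[
\sigma(\tau,\xi) \leq \frac{\mathcal{H}^{d-1}\bigl(\{\Phi_\xi = \tau\}\cap \supp F\cap(\xi - \supp G)\bigr)}{\max(\lambda_1,\lambda_2)\,N_{\max}},
\]
so the problem reduces to bounding the $(d-1)$-area of the level set intersected with the dyadic supports by $N_{\min}^{d-1}$. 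This is the hardest part: one selects a coordinate direction in which $\nabla\Phi_\xi$ has a component comparable to its full norm, writes the level surface locally as a graph over the orthogonal hyperplane, and bounds the $(d-1)$-volume of the projected support region using both the annular localizations at scales $N_1$, $N_2$, $N_3$ and the slab localizations at scales $\lambda_1$, $\lambda_2$. One then checks that in every configuration compatible with the transversality hypothesis the resulting projection has $(d-1)$-volume $\lesssim N_{\min}^{d-1}$. Taking square roots gives \eqref{eq:transversest}.
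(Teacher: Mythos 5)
The paper gives no proof of this lemma: it is cited directly as a special case of \cite[Lemma 2.6]{CH18}. Your proposal attempts to supply a self-contained proof via the classical Plancherel--Cauchy--Schwarz--coarea reduction to a resonance-measure bound. The setup (Fourier support on the characteristic surface, Cauchy--Schwarz in the convolution variable, coarea formula converting the delta into a surface integral weighted by $|\nabla\Phi_\xi|^{-1}$, and the transversality hypothesis furnishing the lower bound on $|\nabla\Phi_\xi|$) is the right framework and is carried out correctly.

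There is, however, a genuine gap in the final step. You claim that the $(d-1)$-dimensional Hausdorff measure of $\{\Phi_\xi=\tau\}\cap\supp F\cap(\xi-\supp G)$ is $\lesssim N_{\min}^{d-1}$ in every configuration, by projecting onto a hyperplane transverse to $\nabla\Phi_\xi$ and bounding the projected support. This works when $N_{\min}\in\{N_1,N_2\}$, since then one of the two input supports is already contained in a ball of radius $\sim N_{\min}$, so any $(d-1)$-dimensional surface inside it has measure $\lesssim N_{\min}^{d-1}$. It fails when $N_3=N_{\min}\ll N_1\sim N_2$: in that regime both input supports live in annuli of radius $\sim N_{\max}$, the two annuli are nearly concentric (their centers differ by $|\xi|\sim N_3$), and their intersection fills out a region of diameter $\sim N_{\max}$ transverse to $\xi_1$. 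Even after imposing the slab constraints $|\zeta_1|\sim\lambda_1$, $|\xi_1-\zeta_1|\sim\lambda_2$ with $\max(\lambda_1,\lambda_2)\lesssim N_3$ (which the transversality hypothesis forces here, since $\nabla\varphi$ is even and Lipschitz with constant $\lesssim N_{\max}$, whence $|\nabla\varphi(\zeta)-\nabla\varphi(\xi-\zeta)|\lesssim N_3 N_{\max}$), the level surface restricted to this set has $(d-1)$-area of order $N_3 N_{\max}^{d-2}$ rather than $N_3^{d-1}$; a constant-$\tau$ slice through a dyadic shell carries a full $(d-2)$-sphere of radius $\sim N_{\max}$. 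So the pointwise bound $\sigma(\tau,\xi)\lesssim N_{\min}^{d-1}/(\max(\lambda_1,\lambda_2)N_{\max})$ cannot be obtained by the raw Cauchy--Schwarz and a naive area bound in the low-output case.

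The standard remedy, which is the content of the cited lemma in \cite{CH18}, is to decompose the Fourier supports of $f$ and $g$ into cubes of sidelength $\sim N_{\min}$. On each pair of cubes the level-set area is genuinely $\lesssim N_{\min}^{d-1}$ and your argument closes; the sum over cube pairs is then controlled by almost orthogonality, since the output projection $P_{N_3}$ forces a cube of $\supp\widehat{f}$ to interact with only finitely many cubes of $\supp\widehat{g}$, and the resulting pieces are nearly disjoint in the $\tau$-variable. Without this decomposition and orthogonality step, the proposal does not yield the stated estimate in the case $N_{\min}=N_3\ll N_{\max}$, which does occur in the paper's applications (Proposition~\ref{prop:inter}).
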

This is an instance of the well-known  bilinear transversal estimate, e.g. a special case of \cite[Lemma 2.6]{CH18}, where a proof can be found.

Next, we recall the definitions of $U^p$ and $V^p$ spaces, which have been introduced in \cite{KT05} the dispersive PDE context. We refer the reader to \cite{CH18} and the references therein for further details.
For $1\leq p <\infty$, we call a function $a: \R \to L^2(\R^d)$ a $p-$atom, if there exists a finite partition $\mathcal{J}=\{(-\infty,t_1),[t_2,t_3), \ldots, [t_K,\infty)\}$ of the real line such that
\[a(t)=\sum_{J \in \mathcal{J}}\mathbf{1}(t)f_J, \quad \sum_{J \in \mathcal{J}}\|f_J\|_{L^2}^p\leq 1.\]
Now, $U^p$ is defined as the space of all $u: \R \to L^2(\R^d)$, such that there exists an atomic decomposition $u=\sum_{j=1 }^\infty c_j a_j$, where $(c_j)\in \ell^1(\N)$ and the $a_j$'s are $p-$atoms. Then, $\|u\|_{U^p}=\inf \sum_{j=1}^\infty |c_j|$ is a norm (the infimum is taken with respect to all possible atomic decompositions), so that $U^p$ is a Banach space. Further, let $V^p$ denote the space of all right-continuous functions $v: \R \to L^2(\R^d)$, such that
\[
\|v\|_{V^p}=\|v\|_{L^\infty_t L^2_x}+\sup \Big(\sum_{j \in \Z}\|v(t_j)-v(t_{j-1})\|_{L^2_x}^p\Big)^{\frac1p}<\infty,
\]
where the supremum is taken over all increasing sequences $(t_j)$.
Now, we define the atomic space $U_{S}^p = e^{\cdot \, S} U^p$ with norm $\|u \|_{U_S^p} = \|e^{- \, \cdot \, S} u\|_{U^p}$, and
 $V_{S}^p = e^{\cdot \, S} V^p$ with norm $\|u \|_{V_S^p} = \|e^{- \, \cdot \, S} u\|_{V^p}$.

There is the embedding $V_{S}^p\subset U_{S}^q$ if $p<q$, see \cite[Lemma 6.4]{KT05}. Due to the atomic structure of $U_{S}^q$ and the Strichartz estimate \eqref{eq:homogeneousStrichartz}, we have
\begin{equation}\label{eq:linearest1}
\| D_{x_1}^{\frac{1}{q}} u\|_{L_t^{q} L_{x'}^{r} L_{x_1}^2} \lesssim \| u \|_{U^{q}_S}
\end{equation}
for $(d-1)-$admissible pairs, and  $\| u \|_{U^{q}_S}$ may be replaced by  $\| u \|_{V^{2}_S}$ for non-endpoint pairs, i.e. when $q>2$.

Let $\lambda_{\max}  := \max (\lambda_1,\lambda_2,\lambda_3)$ and
$\lambda_{\min} := \min (\lambda_1,\lambda_2,\lambda_3)$.  We use the shorthand notation $u_N := P_{N} u$, $u_{N, \lambda} := Q_{\lambda} P_{N} u $, etc.
\begin{prop}\label{prop:inter}
Let $d\geq 6$ and the pair $(q,r)$ be $(d-1)$-admissible with  $2<q<\frac{2(d-3)}{d-5}$, and let $\varepsilon >0$. Suppose
\[
|\nabla \varphi(\xi)-\nabla \varphi(\eta)|\gtrsim \max\{\lambda_1,\lambda_2\} N_{\max},
\]
for all $\xi \in \supp_{\xi} \widehat{u}_{N_1,\lambda_1}$, $\eta \in \supp_{\xi} \widehat{v}_{N_2,\lambda_2}$. Then, for all $\lambda_j,N_j \in 2^\Z$,
\begin{equation}\label{eq:bilinearest2}
\begin{split}
& \|  P_{N_3} Q_{\lambda_3} (  u_{N_1,\lambda_1} v_{N_2,\lambda_2}) \|_{L_t^{q'} L_{x'}^{r'} L_{x_1}^2} \\
& \lesssim \lambda_{\max}^{-\frac12+  \frac{2 \varepsilon}{d-3}}
\lambda_{\min}^{-\frac{1}{2}+\frac{1}{d-3}+\frac{1}{q}-\frac{2 \varepsilon}{d-3}} N_{\max}^{-\frac12 + \frac{1}{d-3}} N_{\min}^{\frac{d-3}{2} - \frac{2}{d-3}}
\|u_{N_1,\lambda_1} \|_{V^2_S}
\| v_{N_2,\lambda_2}\|_{V^2_S}.
\end{split}
\end{equation}
\end{prop}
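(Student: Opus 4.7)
The plan is to combine the bilinear transversal estimate of Lemma~\ref{lem:trans} with the non-isotropic Strichartz estimate \eqref{eq:homogeneousStrichartz}, and then bilinearly interpolate to reach the target $L^{q'}_tL^{r'}_{x'}L^2_{x_1}$ with $V^2_S\times V^2_S$ inputs.

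First, using the atomic structure of $U^2_S$, I would upgrade Lemma~\ref{lem:trans} to
\[
\|P_{N_3}Q_{\lambda_3}(u_{N_1,\lambda_1}v_{N_2,\lambda_2})\|_{L^2_tL^2_x}\lesssim \Big(\frac{N_{\min}^{d-1}}{\lambda_{\max}N_{\max}}\Big)^{\frac12}\|u_{N_1,\lambda_1}\|_{U^2_S}\|v_{N_2,\lambda_2}\|_{U^2_S}.
\]
Second, applying \eqref{eq:linearest1} at the pair $(q,r)$ to each factor (valid with $V^2_S$-norms on the right since $q>2$), H\"older's inequality in the three variables $(t,x',x_1)$, and Bernstein's inequality in $x_1$ on the frequency-localized output, I obtain
\[
\|P_{N_3}Q_{\lambda_3}(u_{N_1,\lambda_1}v_{N_2,\lambda_2})\|_{L^{q/2}_tL^{r/2}_{x'}L^2_{x_1}}\lesssim \lambda_3^{\frac12}(\lambda_1\lambda_2)^{-\frac{1}{q}}\|u_{N_1,\lambda_1}\|_{V^2_S}\|v_{N_2,\lambda_2}\|_{V^2_S}.
\]

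I would then interpolate these two bilinear estimates. A complex bilinear interpolation between them, combined with the Koch--Tataru-type upgrade lemma replacing $U^2_S$ by $V^2_S$ at the cost of a logarithmic loss (see \cite{KT05,CH18}), yields an intermediate $V^2_S\times V^2_S$ bound; the log-loss is absorbed into the parameter $\varepsilon$. Bernstein's inequality in $x'$, applied to the output at spatial frequency $|\xi|\sim N_3$ and to the inputs at frequencies $N_1,N_2$, then bridges the remaining gap to $L^{q'}_tL^{r'}_{x'}L^2_{x_1}$ and produces the corrections $N_{\max}^{1/(d-3)}$ and $N_{\min}^{-2/(d-3)}$. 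Finally, $\lambda_3\leq\lambda_{\max}$ and $(\lambda_1\lambda_2)^{-1/q}\leq\lambda_{\max}^{-1/q}\lambda_{\min}^{-1/q}$ yield the claimed $\lambda$-powers once the interpolation weight is tuned.

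The main technical obstacle is that for $q>3$ the time exponent $q'<2$ lies outside the convex hull of the exponents $2$ and $q/2$, so log-convexity of Lebesgue norms in time does not by itself suffice to reach the target. Exploiting the spatial frequency localization through Bernstein (trading Lebesgue integrability for powers of $N_{\max}$ and $\lambda_{\max}$) is the key to bridging this gap, and the upper admissibility restriction $q<\frac{2(d-3)}{d-5}$ is exactly what is needed to keep the corresponding Bernstein exponents in the valid range. The free parameter $\varepsilon$ encodes simultaneously the interpolation weight between the two endpoint estimates and the compensation for the $V^2_S$-upgrade loss.
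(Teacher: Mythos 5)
Your overall architecture matches the paper — transversal estimate upgraded to $U^2_S$, a H\"older/Strichartz bilinear bound, and then an interpolation \`a la Koch--Tataru absorbing the $V^2$-to-$U^p$ loss into $\varepsilon$ — but there is a genuine gap in the middle step that your own discussion correctly identifies but then does not resolve.

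You apply the non-isotropic Strichartz estimate at the same pair $(q,r)$ to both factors, yielding a bilinear bound in $L^{q/2}_t L^{r/2}_{x'} L^2_{x_1}$. As you observe, when $q>3$ the target time exponent $q'$ lies strictly outside $[\min(2,q/2),\max(2,q/2)]$, so convexity of Lebesgue norms between your two bilinear bounds cannot reach $L^{q'}_t$. Your proposed remedy is Bernstein in $x'$, but Bernstein trades spatial integrability for powers of frequency and has no effect on the temporal exponent; the output $P_{N_3}Q_{\lambda_3}(u_{N_1,\lambda_1}v_{N_2,\lambda_2})$ carries no temporal frequency localization that could be exploited. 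So the gap in the time exponent is not bridgeable by the tools you invoke, and the argument does not close for $q>3$ (and $d\geq 8$ already forces the relevant $q$ beyond~$3$).

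What the paper does instead is structurally different at exactly this point: it applies \eqref{eq:linearest1} with two \emph{distinct} $(d-1)$-admissible pairs, $(q_1,r_1)$ with $1/q_1=\tfrac12-\varepsilon$ (near the Keel--Tao endpoint) on the factor carrying $\lambda_{\max}$, and a $q$-dependent pair $(q_2,r_2)$ with $1/q_2=\tfrac{d-3}{4}-\tfrac{d-3}{2q}+\varepsilon$ on the other. With $1/\alpha=1/q_1+1/q_2$ and $1/\beta=1/r_1+1/r_2$, the fixed weight $\theta=\tfrac{2}{d-3}$ satisfies
\[
\frac{\theta}{\alpha}+\frac{1-\theta}{2}=\frac{1}{q'},\qquad \frac{\theta}{\beta}+\frac{1-\theta}{2}=\frac{1}{r'},
\]
so the interpolation with the $L^2_{t,x}$ transversal bound lands \emph{exactly} on $(q',r')$ with no need for a further Bernstein correction. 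The restriction $2<q<\tfrac{2(d-3)}{d-5}$ is there to guarantee that $(q_2,r_2)$ is a non-endpoint admissible pair (so $q_2>2$, making the Koch--Tataru sum converge), not for any Bernstein-exponent reason. If you want to salvage your version, you should replace the single-pair H\"older step by this asymmetric two-pair choice; the rest of your outline then goes through.
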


\begin{proof}
By symmetry, we may assume that $\lambda_1 \sim \lambda_{\max}$. For a sufficiently small $\varepsilon>0$, we define the $(d-1)$-admissible pairs $(q_1,r_1)$ and $(q_2,r_2)$ by
\[
\Bigl( \frac{1}{q_1}, \ \frac{1}{r_1} \Bigr) = \Bigl(\frac12 -\varepsilon, \frac12-\frac{1- 2\varepsilon}{d-1} \Bigr), \
\Bigl(\frac{1}{q_2}, \ \frac{1}{r_2}\Bigr) = \Bigl(\frac{d-3}{4}- \frac{d-3}{2 q} + \varepsilon, \
\frac{d-3}{q(d-1)} + \frac{1- 2 \varepsilon}{d-1} \Bigr).
\]
In addition, letting
\[
\frac{1}{\alpha} = \frac{1}{q_1} + \frac{1}{q_2} , \qquad
\frac{1}{\beta} = \frac{1}{r_1} + \frac{1}{r_2},
\]
by using \eqref{eq:linearest1}, we have
\begin{align}
\|  P_{N_3}Q_{\lambda_3}(  u_{N_1,\lambda_1} v_{N_2,\lambda_2}) \|_{L_t^{\alpha} L_{x'}^{\beta} L_{x_1}^2} & \lesssim \lambda_{\min}^{\frac12}
\| u_{N_1,\lambda_1} \|_{L_t^{q_1} L_{x'}^{r_1} L_{x_1}^2}
\| v_{N_2,\lambda_2}\|_{L_t^{q_2} L_{x'}^{r_2} L_{x_1}^2}\notag \\
& \lesssim \lambda_{\min}^{\frac12-\frac{1}{q_2}} \lambda_{\max}^{-\frac{1}{q_1}} \|u_{N_1,\lambda_1} \|_{U^{q_1}_S}
\| v_{N_2,\lambda_2}\|_{U_S^{q_2}}.\label{eq:bilinearest1}
\end{align}
Lemma \ref{lem:trans} immediately extends from free solutions to $2-$atoms. Therefore, the atomic structure of $U^2$ implies
\begin{equation}\label{eq:transversest2}
\|P_{N_3}Q_{\lambda_3}(  u_{N_1,\lambda_1} v_{N_2,\lambda_2}) \|_{L_t^2 L_x^2} \lesssim \Big(\frac{N_{\min}^{d-1}}{ \lambda_{\max} N_{\max}}\Big)^{\frac12}  \|u_{N_1,\lambda_1} \|_{U^{2}_S}
\| v_{N_2,\lambda_2}\|_{U_S^{2}}.
\end{equation}
For $\theta = \frac{2}{d-3}$, it is observed that
\[
\frac{\theta}{\alpha} + \frac{1- \theta}{2} = 1 - \frac{1}{q} \Bigl( =: \frac{1}{q'} \Bigr), \quad
\frac{\theta}{\beta} + \frac{1- \theta}{2} = 1 - \frac{1}{r} \Bigl(=: \frac{1}{r'} \Bigr).
\]
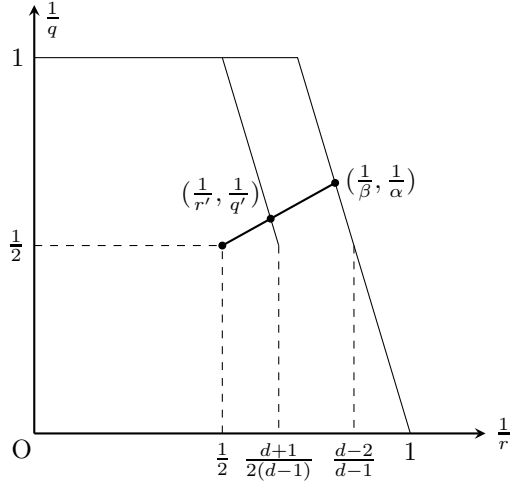
\begin{figure}\label{fig:inter}
  \caption{Choice of parameters in the interpolation argument}
\begin{tikzpicture}[thick]
  \useasboundingbox (0,-0.8) rectangle (7,6);
   \draw [thick, -stealth](0.5,0)--(6.5,0) node [anchor=west, font=\normalsize]{$\frac{1}{r}$};
   \draw [thick, -stealth](0.5,0)--(0.5,5.7) node [anchor=west, font=\small]{};
\node [anchor=west] at(0.5,5.5){$\frac{1}{q}$};
\node [anchor=east] at(0.5,5){$1$};
\node [anchor=east] at(0.6,-0.2){O};
\fill (3,2.5) circle (1.5pt);
\draw [thin, dashed] (0.5,2.5)--(3,2.5);
\draw [thin,dashed] (3,0)--(3,2.5);
\node [anchor=east] at(0.5,2.5){$\frac12$};
\node [anchor=north] at(3,0){$\frac12$};
\draw [thin](0.5,5)--(4,5)--(5.5,0);
\draw [thin](3,5)--(3.75,2.5);
\draw [thin, dashed](4.75,2.5)--(4.75,0);
\node [anchor=north, font=\normalsize] at(4.75,0){$\frac{d-2}{d-1}$};
\node [anchor=north, font=\normalsize] at(5.5,0){$1$};
\draw [thin, dashed](3.75,2.5)--(3.75,0);
\node [anchor=north, font=\normalsize] at(3.75,0){$\frac{d+1}{2(d-1)}$};
\draw [thick](3,2.5)--(4.5,10/3) node [anchor=west, font=\normalsize]{$(\frac{1}{\beta}, \frac{1}{\alpha})$};
\fill (4.5,10/3) circle (1.5pt);
\fill (3+9/14,2.5+5/14) circle (1.5pt);
\node [anchor=east, font=\normalsize] at(3+9/14,2.5+5/14+0.3){$(\frac{1}{r'},\frac{1}{q'})$};
\end{tikzpicture}
\end{figure}
Now, we interpolate \eqref{eq:bilinearest1} and \eqref{eq:transversest2} to obtain \eqref{eq:bilinearest2}. More precisely, we follow the argument in \cite[p.~1203]{CH18}: For brevity, we set $u:=u_{N_1,\lambda_1} $, $v:=v_{N_2,\lambda_2} $. Then, \cite[Lemma 6.4]{KT05} implies that there exist decompositions $u=\sum_{k=1}^\infty u_{k}$, such that $\widehat{u_{k}}\subset \supp \widehat{u}$, and for any $q \geq 2$ we have
$
\|u_{k}\|_{U^{q}_S}\lesssim 2^{k(\frac{2}{q}-1)}\|u\|_{V^2_S},
$
and the analogous decomposition for $v$.
Then, by convexity,  we obtain
\begin{align*}
  \|  P_{N_3} Q_{\lambda_3} (  u v) \|_{L_t^{q'} L_{x'}^{r'} L_{x_1}^2} \lesssim {}& \sum_{k,k'\in \N}\|  P_{N_3} Q_{\lambda_3} (  u_k v_{k'}) \|_{L_t^{q'} L_{x'}^{r'} L_{x_1}^2} \\
  \lesssim {}& \sum_{k,k'\in \N}\|  P_{N_3} Q_{\lambda_3} (  u_k v_{k'}) \|_{L_t^{\alpha} L_{x'}^{\beta} L_{x_1}^2}^\theta\|  P_{N_3} Q_{\lambda_3} (  u_k v_{k'}) \|_{L^2_{t,x}}^{1-\theta}.
\end{align*}
Estimates \eqref{eq:bilinearest1} and \eqref{eq:transversest2} further imply
\begin{align*}
\|  P_{N_3} Q_{\lambda_3} (  u v) \|_{L_t^{q'} L_{x'}^{r'} L_{x_1}^2} \lesssim {}& \Big(\sum_{k,k'\in \N}  2^{k\theta (\frac{2}{q_1}-1)}2^{k'\theta(\frac{2}{q_2}-1)}\Big) \frac{\lambda_{\min}^{\frac{\theta}{2}-\frac{\theta}{q_2}} N_{\min}^{\frac{d-1}{2}(1-\theta)}}{ \lambda_{\max}^{\frac{\theta}{q_1}+\frac{1-\theta}2} N_{\max}^{\frac{1-\theta}2}}  \|u\|_{V^2_S}\|v\|_{V^2_S}  .
\end{align*}
Since $q_1,q_2>2$, the sums converge, and the proof of \eqref{eq:bilinearest2} is complete.
\end{proof}

Define
\[
\mathcal{R}_{\lambda_{\max},N_{\max}} = \{ (\xi_1, \xi') \in \R \times \R^{d-1} \, | \, |\xi_1|\ll \lambda_{\max}, \ |\xi'| \ll N_{\max} \}.
\]
\begin{lem}\label{lem2.5}
Assume that there exist $\gamma_1$, $\gamma_2$, $\gamma_3 \in \R^{d}$ such that $\gamma_1 + \gamma_2 -\gamma_3 \in \mathcal{R}_{4 \lambda_{\max}, 4N_{\max}}$ and
\begin{equation}\label{assumption:inclusions}
\supp_{\xi} \widehat{u}_{N_i, \lambda_i} \subset \mathcal{R}_{\lambda_{\max},N_{\max}} + \gamma_i :=
\{\xi \in \R^d \, | \, \xi - \gamma_i \in \mathcal{R}_{\lambda_{\max},N_{\max}} \} .
\end{equation}
Then we have either
\begin{equation}\label{est:transversality2}
|\nabla \varphi(\xi) - \nabla \varphi (\eta)| \gtrsim \lambda_{\max}N_{\max},
\end{equation}
for all $\xi \in \supp_{\xi} \widehat{u}_{N_1, \lambda_1}$, $\eta \in \supp_{\xi} \widehat{u}_{N_2, \lambda_2}$, or
\begin{equation}
|\nabla \varphi(\eta)-\nabla \varphi(\zeta)| \gtrsim \lambda_{\max}N_{\max},
\end{equation}
for all $\eta \in \supp_{\xi} \widehat{u}_{N_2, \lambda_2}$, $\zeta \in \supp_{\xi} \widehat{u}_{N_3, \lambda_3}$.
\end{lem}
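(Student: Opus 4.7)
I would use a two-step argument: first, reduce transversality on $\supp \widehat{u}_{N_i, \lambda_i}$ to a quantitative condition at the centers $\gamma_i$; second, apply the cubic algebra of $\varphi(\xi) = \xi_1|\xi|^2$ together with the constraint $\gamma_1+\gamma_2-\gamma_3 \in \mathcal{R}_{4\lambda_{\max}, 4N_{\max}}$ to force one of the two pair-transversalities.

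For the reduction, writing $\xi = \gamma_1 + r$, $\eta = \gamma_2 + s$ with $r, s \in \mathcal{R}_{\lambda_{\max}, N_{\max}}$, a direct expansion of the tangential components $(\nabla\varphi)_j(\xi) = 2\xi_1 \xi_j$, $j \geq 2$, yields
\[
(\nabla\varphi(\xi) - \nabla\varphi(\eta))' = 2(\gamma_{1,1}\gamma_1' - \gamma_{2,1}\gamma_2') + E,
\]
where $E$ consists of cross terms, each bounded (using $|\gamma_{i,1}|, |r_1|, |s_1| \lesssim \lambda_{\max}$ and $|\gamma_i'|, |r'|, |s'| \lesssim N_{\max}$) by a small multiple of $\lambda_{\max}N_{\max}$. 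Provided the constant hidden in ``$\ll$'' in the definition of $\mathcal{R}$ is taken small enough, $(1,2)$-transversality on the full support reduces to $|\gamma_{1,1}\gamma_1' - \gamma_{2,1}\gamma_2'| \gtrsim \lambda_{\max} N_{\max}$ at the centers, and similarly for $(2,3)$.

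For the second step, I would use the cubic identity $\nabla\varphi(\alpha+\beta) = \nabla\varphi(\alpha) + \nabla\varphi(\beta) + \nabla^2\varphi(\alpha)\beta$, valid because $\varphi$ is cubic and hence $\nabla^2\varphi$ linear. Since $\gamma_3 = \gamma_1+\gamma_2+\delta$ with $\delta \in \mathcal{R}_{4\lambda_{\max},4N_{\max}}$, taking the $\xi'$-projection and using $|(\nabla^2\varphi(\xi)v)'| \lesssim |v_1||\xi'| + |\xi_1||v'|$ to control the $\delta$-error by $O(\lambda_{\max}N_{\max})$ gives
\[
\gamma_{2,1}\gamma_2' - \gamma_{3,1}\gamma_3' = -(\gamma_{1,1}\gamma_1' + \gamma_{1,1}\gamma_2' + \gamma_{2,1}\gamma_1') + O(\lambda_{\max}N_{\max}).
\]

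The main obstacle is the case analysis showing that $|\gamma_{1,1}\gamma_1' - \gamma_{2,1}\gamma_2'|$ and $|\gamma_{1,1}\gamma_1' + \gamma_{1,1}\gamma_2' + \gamma_{2,1}\gamma_1'|$ cannot both be $\ll \lambda_{\max}N_{\max}$. Substituting the first into the second yields $\gamma_{3,1}\gamma_2' \approx -\gamma_{2,1}\gamma_1'$, and eliminating between the two relations produces $(\gamma_{1,1}^2 + \gamma_{1,1}\gamma_{2,1} + \gamma_{2,1}^2)\,\gamma_1' = O(\varepsilon\lambda_{\max}^2 N_{\max})$ for small $\varepsilon$. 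Since $x^2+xy+y^2 \geq \tfrac{3}{4}\max(x^2,y^2)$, this forces either $|\gamma_1'| \ll N_{\max}$ or $\max(|\gamma_{1,1}|, |\gamma_{2,1}|) \ll \lambda_{\max}$; the latter alternative implies $|\gamma_{3,1}| \le |\gamma_{1,1}|+|\gamma_{2,1}|+|\delta_1|\ll \lambda_{\max}$, hence $\lambda_3\ll \lambda_{\max}$, contradicting the definition of $\lambda_{\max}$. The symmetric alternative $|\gamma_1'|\ll N_{\max}$ is handled by interchanging the roles of $\gamma_1',\gamma_2'$; in the remaining regime $\lambda_{\max}\sim N_{\max}$, one argues instead through the first component $(\nabla\varphi)_1 = 3\xi_1^2+|\xi'|^2$, whose variation on each box is $O(N_{\max}^2)\sim O(\lambda_{\max}N_{\max})$ and thus absorbable once the constant in ``$\ll$'' is small enough.
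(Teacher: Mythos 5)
Your approach is a genuinely different one from the paper's. The paper splits directly into two cases according to whether $\max(|\xi'|,|\eta'|,|\zeta'|)\ll N_{\max}$ or $\sim N_{\max}$, and in the second case manipulates the pointwise quantity $|\xi_1\xi'-\eta_1\eta'|+|\eta_1\eta'-(\xi_1+\eta_1)(\xi'+\eta')|$ using the identity $|\alpha+\alpha^{-1}|\geq 2$. You instead reduce everything to the box centers $\gamma_i$ right away and run an algebraic elimination there, which is cleaner and more systematic: your elimination to $(\gamma_{1,1}^2+\gamma_{1,1}\gamma_{2,1}+\gamma_{2,1}^2)\gamma_1'=O(\varepsilon\lambda_{\max}^2N_{\max})$ checks out (one can likewise eliminate to produce the same quadratic form against $\gamma_2'$), the completion-of-squares bound $x^2+xy+y^2\geq\tfrac34\max(x^2,y^2)$ is correct, and the contradiction in the subcase $\max(|\gamma_{1,1}|,|\gamma_{2,1}|)\ll\lambda_{\max}$ is also correct provided the implicit constants are chosen consistently (smallness of the box parameter forces $|\gamma_{i,1}|\gtrsim\lambda_i$ whenever $\lambda_i\sim\lambda_{\max}$).

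The genuine gap is the final paragraph. Once you have forced $|\gamma_i'|\ll N_{\max}$ for all $i$, hence $\lambda_{\max}\sim N_{\max}$, you assert that ``one argues instead through the first component $(\nabla\varphi)_1=3\xi_1^2+|\xi'|^2$, whose variation on each box is $O(N_{\max}^2)$ and thus absorbable.'' Controlling the variation over the boxes only reduces the problem to verifying transversality at the centers; it does not by itself produce transversality. What is still needed is a positive lower bound of the form
\[
|\gamma_{1,1}^2-\gamma_{2,1}^2|+|\gamma_{2,1}^2-\gamma_{3,1}^2|\gtrsim N_{\max}^2,
\]
(equivalently, for $\xi,\eta$ in the boxes, $3|\xi_1^2-\eta_1^2|+3|\xi_1(\xi_1+2\eta_1)|\gtrsim N_{\max}^2$ once the $|\xi'|^2$-errors are absorbed). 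This is not automatic: it relies on the observation that $\xi_1^2=\eta_1^2$ and $\xi_1(\xi_1+2\eta_1)=0$ together force $\xi_1=\eta_1=0$, and on a compactness/homogeneity argument to make it quantitative. The paper's proof devotes its entire ``Case $\max(|\xi'|,|\eta'|,|\zeta'|)\ll N_{\max}$'' to exactly this inequality, so it is a substantive missing step in your write-up, not a routine absorption. Supplying this bound would complete your proof and make it a valid alternative to the one in the paper.
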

\begin{proof}
Firstly, we consider the case $\max(|\xi'|, |\eta'|, |\zeta'|) \ll N_{\max}$.
We deduce from $\partial_1 \varphi(\xi)= 3 \xi_1^2 + |\xi'|^2$ that
\begin{align*}
& |\partial_1 \varphi(\xi) - \partial_1 \varphi (\eta)| + | \partial_1 \varphi (\eta) - \partial_1 \varphi(\xi + \eta)|\\
 & \geq 3|\xi_1^2 - \eta_1^2| + 3|\xi_1(\xi_1+2\eta_1)| - |\xi'|^2-|\eta'|^2-|\xi'+\eta'|^2 \gtrsim N_{\max}^2,
\end{align*}
which implies the claim since $|\nabla^2 \varphi(\xi)| \lesssim |\xi|$.

Next we assume $\max(|\xi'|, |\eta'|, |\zeta'|) \sim N_{\max}$. For all $\xi \in \supp_{\xi} \widehat{u}_{N_1, \lambda_1}$, $\eta \in \supp_{\xi} \widehat{u}_{N_2, \lambda_2}$, $\xi + \eta \in \supp_{\xi} \widehat{u}_{N_3, \lambda_3}$, we will show
\begin{equation}\label{est:transversality}
\sum_{j=2}^d \bigl(|\partial_j \varphi(\xi) - \partial_j \varphi (\eta)| + |\partial_j \varphi(\eta)-\partial_j \varphi(\xi + \eta)| \bigr) \gtrsim \lambda_{\max}N_{\max}.
\end{equation}
We may assume $|\xi'| \sim N_{\max}$, $\lambda_1 \sim \lambda_{\max}$.
For $2 \leq j \leq d$, it is observed that $\partial_j \varphi (\xi) = 2 \xi_1 \xi_j$. Then, for \eqref{est:transversality}, it suffices to show
\begin{equation}\label{est:transversality2-lem2.5}
|\xi_1 \xi' - \eta_1 \eta'| + |\eta_1 \eta' - (\xi_1+\eta_1)(\xi' + \eta')| \gtrsim \lambda_1 N_1.
\end{equation}
Since $|\xi'| \sim N_{\max}$, $\lambda_1 \sim \lambda_{\max}$, if either $\lambda_{\min} \ll \lambda_{\max}$ or $\min(|\eta'|, |\xi'+\eta'|) \ll N_{\max}$ holds, we easily verify \eqref{est:transversality2-lem2.5}. Then we assume $\lambda_1 \sim \lambda_2 \sim \lambda_3$ and $|\xi'| \sim |\eta'| \sim |\xi'+\eta'|$. We observe
\begin{align*}
|\eta_1 \eta' - (\xi_1+\eta_1)(\xi' + \eta')| & = \bigl|\eta_1 \eta' - (\xi_1 + \eta_1)
\bigl( \xi' - \frac{\eta_1}{\xi_1} \eta' + \frac{\eta_1}{\xi_1} \eta' + \eta' \bigr)\bigr|\\
& \geq \bigl| \eta_1 \eta' - (\xi_1 + \eta_1)
\bigl( 1 + \frac{\eta_1}{\xi_1}\bigr) \eta' \bigr| - \bigl| \bigl(1+\frac{\eta_1}{\xi_1}\bigr)
(\xi_1 \xi' - \eta_1 \eta')\bigr|\\
& = \bigl| 1+\frac{\eta_1}{\xi_1}+ \frac{\xi_1}{\eta_1} \, \bigr| |\eta_1 \eta'| - \bigl| \bigl(1+\frac{\eta_1}{\xi_1}\bigr)
(\xi_1 \xi' - \eta_1 \eta')\bigr|.
\end{align*}
Since $|\alpha + \alpha^{-1}| \geq 2$ for any $\alpha \in \R$,
this completes the proof of \eqref{est:transversality2-lem2.5}.

From \eqref{est:transversality}, without loss of generality, we can assume that there exist $\xi_0 \in \supp_{\xi} \widehat{u}_{N_1, \lambda_1}$, $\eta_0 \in \supp_{\xi} \widehat{u}_{N_2, \lambda_2}$ such that
\begin{equation}\label{est:transversality3-lem2.5}
\sum_{j=2}^d |\partial_j \varphi(\xi_0) - \partial_j \varphi (\eta_0)| \gtrsim \lambda_{\max}N_{\max}.
\end{equation}
For $2 \leq j, k \leq d$ and all $\xi \in \supp_{\xi} \widehat{u}_{N_1, \lambda_1}$, $\eta \in \supp_{\xi} \widehat{u}_{N_2, \lambda_2}$, since $|\partial_1 \partial_j \varphi (\xi)| + |\partial_1 \partial_j \varphi (\eta)| \lesssim N_{\max}$ and
$|\partial_k \partial_j \varphi (\xi)| +|\partial_k \partial_j \varphi (\eta)| \lesssim \lambda_{\max}$, we get
\[
| \partial_j \varphi(\xi) - \partial_j \varphi(\xi_0)| + | \partial_j \varphi(\eta) - \partial_j \varphi(\eta_0)| \ll
\lambda_{\max} N_{\max},
\]
for all $\xi \in \supp_{\xi} \widehat{u}_{N_1, \lambda_1}$, $\eta \in \supp_{\xi} \widehat{u}_{N_2, \lambda_2}$.
This estimate and \eqref{est:transversality3-lem2.5} yield the claim \eqref{est:transversality2}.
\end{proof}

Now we define the solution spaces as
$Y^s:=C(\R; H^s(\R^d) \cap \LR{\nabla_x}^{-s} V^2_S$ and $ \dot{Y}^s:=C(\R; \dot{H}^s(\R^d) \cap {|\nabla_x|}^{-s} V^2_S$, with norms
\begin{align*}
& \| u \|_{Y^s} := \Bigl( \sum_{N \in 2^{\Z}}  \langle N\rangle^{2 s} \| P_N  u \|_{V^2_S}^2 \Bigr)^{1/2} , \\
& \| u \|_{\dot{Y}^s} := \Bigl( \sum_{N \in 2^{\Z}}  N^{2 s} \| P_N u \|_{V^2_S}^2 \Bigr)^{1/2},
\end{align*}
respectively.

\begin{prop}\label{prop:d6}
Let $d \geq 6$. Then we have
\[
\| \mathcal{I} (\partial_{x_1}(u_1u_2)) \|_{Y^{s_c}} \lesssim \|u_1 \|_{Y^{s_c}} \| u_2 \|_{Y^{s_c}}, \quad
\| \mathcal{I} (\partial_{x_1}(u_1u_2)) \|_{\dot{Y}^{s_c}} \lesssim \|u_1 \|_{\dot{Y}^{s_c}} \| u_2 \|_{\dot{Y}^{s_c}}.
\]
\end{prop}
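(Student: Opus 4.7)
\medskip

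\noindent\textbf{Proof plan.} The approach is to reduce the Duhamel estimate to a trilinear form via the standard $U^p$/$V^p$ duality for the Duhamel operator, then combine the bilinear estimate \eqref{eq:bilinearest2}, the cube-based transversality of Lemma \ref{lem2.5}, and the linear embedding \eqref{eq:linearest1}, and finally sum over all dyadic decompositions in a manner compatible with the $\ell^2$-sum defining $Y^{s_c}$ and $\dot{Y}^{s_c}$.

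For each output frequency $N_3 \in 2^{\Z}$ (and the low-frequency block for the inhomogeneous version), duality reduces the estimate to the trilinear form
\[
\left|\iint u_1 u_2\,\partial_{x_1} P_{N_3} v \,dx\,dt\right|,
\]
where $v$ ranges over a unit ball of the pre-dual of $V^2_S$ localized at frequency $N_3$. I would decompose $u_j=\sum_{N_j,\lambda_j} P_{N_j} Q_{\lambda_j} u_j$ and $v=\sum_{\lambda_3} Q_{\lambda_3} v$ and split into the high-high-to-low regime $N_1 \sim N_2 \gtrsim N_3$ and the high-low regime $N_3 \sim N_{\max}$. Within each regime, further decompose the frequency support of each factor into translates of $\mathcal{R}_{\lambda_{\max}, N_{\max}}$ and invoke Lemma \ref{lem2.5}, which supplies, for each admissible cube triple, a pair among the three factors that is transversal in the sense of Proposition \ref{prop:inter}. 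After symmetrizing the roles of the three factors, one may assume that the transversal pair is $(u_{N_1,\lambda_1}, u_{N_2,\lambda_2})$.

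On this transversal pair, I would apply Proposition \ref{prop:inter} with a fixed $(d-1)$-admissible pair $(q, r)$ satisfying $2 < q < 2(d-3)/(d-5)$ and a small $\varepsilon > 0$; on the third factor $\partial_{x_1} P_{N_3} Q_{\lambda_3} v$ apply \eqref{eq:linearest1} (via the embedding $V^2_S \hookrightarrow U^q_S$ since $q>2$), producing a factor $\lambda_3^{1-1/q}$. H\"older's inequality in $L^q_t L^r_{x'} L^2_{x_1}$ versus its dual then yields a trilinear bound in terms of $V^2_S$ norms. The cube-index sum is handled by near-orthogonality via the $L^2$-based structure underlying $V^2_S$. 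Multiplying by $N_3^{s_c}$, summing in $N_3$ by Cauchy--Schwarz against the dual $\ell^2$-sequence from $v$, and summing in $N_1, N_2, \lambda_1, \lambda_2, \lambda_3$ via Schur's test then closes the estimate simultaneously in the inhomogeneous and homogeneous norms.

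The main obstacle is the exponent bookkeeping in the final summation. The powers of $N_{\max}, N_{\min}, \lambda_{\max}, \lambda_{\min}$ coming from \eqref{eq:bilinearest2}, together with the derivative loss $\lambda_3^{1-1/q}$, the output weight $N_3^{s_c}$, and the factors $N_j^{-s_c}$ pulled out of the norms on the right, must arrange themselves as a strictly geometric sum in every configuration of which of the $\lambda_j$'s is maximal, minimal, or intermediate, and analogously for the $N_j$'s. The hypothesis $d \geq 6$ is what supplies a strictly positive power of $N_{\min}/N_{\max}$ in the delicate high-high-to-low regime, where the bilinear gain $N_{\min}^{(d-3)/2 - 2/(d-3)} N_{\max}^{-1/2 + 1/(d-3)}$ must compensate for $N_3^{s_c} = N_3^{(d-4)/2}$; the upper bound $q < 2(d-3)/(d-5)$ provides exactly the slack needed to absorb $\lambda_3^{1-1/q}$ while keeping the $\lambda$-sum geometric. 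The auxiliary parameter $\varepsilon > 0$ is tuned to convert the borderline inequalities into strict ones, in the same spirit as the interpolation argument inside the proof of Proposition \ref{prop:inter}.
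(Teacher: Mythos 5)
Your proposal follows essentially the same route as the paper: reduce to a trilinear form via $U^p$/$V^p$ duality (cf.\ \cite[Lemma 7.3]{CH18}), decompose dyadically in $N$ and $\lambda$ and into translates of $\mathcal{R}_{\lambda_{\max},N_{\max}}$, invoke Lemma~\ref{lem2.5} to isolate a transversal pair, combine Proposition~\ref{prop:inter} on that pair with the Strichartz bound \eqref{eq:linearest1} on the remaining factor, and close the dyadic sums. The exponent diagnosis is also correct: $d\geq 6$ makes $\tfrac12-\tfrac{1}{d-3}>0$, which is exactly the strictly positive $N_{\min}/N_{\max}$ gain needed after the $\lambda$-sums, while the constraint $q<\tfrac{2(d-3)}{d-5}$ (equivalently $\tfrac1q>\tfrac{d-5}{2(d-3)}$) keeps the $\lambda_{\min}$-exponent positive so that the $\lambda$-sums are geometric.

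One small imprecision worth noting: you assert that ``after symmetrizing the roles of the three factors, one may assume the transversal pair is $(u_{N_1,\lambda_1},u_{N_2,\lambda_2})$.'' The three factors are not literally symmetric because $\partial_{x_1}$ sits on the output (dual) factor; Lemma~\ref{lem2.5} only guarantees transversality of either $(1,2)$ or $(2,3)$, and the paper treats these two cases separately. The reason your symmetrization ultimately works is that the derivative only contributes $\lambda_3\lesssim\lambda_{\max}$, and the bound produced by Proposition~\ref{prop:inter} depends on $(\lambda_{\max},\lambda_{\min},N_{\max},N_{\min})$, which are index-symmetric quantities; but this should be said rather than assumed. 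Likewise, attributing the role of $q<\tfrac{2(d-3)}{d-5}$ to ``absorbing $\lambda_3^{1-1/q}$'' is slightly off: the bound is actually what keeps $(q_2,r_2)$ admissible inside Proposition~\ref{prop:inter} and makes the $\lambda_{\min}$-exponent positive; the $\lambda_3^{1-1/q}$ factor itself is simply capped by $\lambda_{\max}^{1-1/q}\leq N_{\max}^{1-1/q}$ and folded into the $\lambda_{\max}$-exponent. Neither point is a gap in the argument, just in the narration.
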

\begin{proof}
We show first that there exists $\varepsilon >0$ such that for any $N_1,N_2,N_3\in 2^\Z$ we have
\begin{equation}\label{eq:prop2.2-1}
\Bigl| \iint P_{N_1} u_1 P_{N_2} u_2 \partial_{x_1} P_{N_3} u_3 dx dt \Bigr|
\lesssim N_{\min}^{s_c + \varepsilon} N_{\max}^{- \varepsilon}  \prod_{i=1}^3 \| P_{N_i}u_{i} \|_{V^2_S}.
\end{equation}
As before, we use the shorthand notation $u_{N_j} := P_{N_j} u_j$, $u_{N_j, \lambda_j} := Q_{\lambda_j} P_{N_j} u_j $, etc. Obviously, \eqref{eq:prop2.2-1} is implied by
\begin{equation}\label{eq:prop2.2-2}
\sum_{\lambda_1, \lambda_2, \lambda_{3} \in 2^{\Z}}\lambda_3 \Bigl| \iint u_{N_1,\lambda_1} u_{N_2,\lambda_2}  u_{N_3,\lambda_3} dx dt \Bigr|
\lesssim N_{\min}^{s_c + \varepsilon} N_{\max}^{- \varepsilon}  \prod_{i=1}^3 \| u_{N_i} \|_{V^2_S}.
\end{equation}

Now we show \eqref{eq:prop2.2-2}.
After harmless decompositions, we may assume that there exist $\gamma_1$, $\gamma_2$, $\gamma_3 \in \R^{d}$ such that
$\gamma_1 + \gamma_2 -\gamma_3 \in \mathcal{R}_{4 \lambda_{\max}, 4N_{\max}}$ and  \eqref{assumption:inclusions}.
Lemma \ref{lem2.5} provides either $|\nabla \varphi(\xi)-\nabla \varphi(\eta)| \gtrsim \lambda_{\max}N_{\max}$ for all $\xi \in \supp_{\xi} u_{N_1,\lambda_1}$, $\eta \in \supp_{\xi} u_{N_2,\lambda_2}$ or $|\nabla \varphi(\eta)-\nabla \varphi(\zeta)| \gtrsim \lambda_{\max}N_{\max}$ for all $\eta \in \supp_{\xi} u_{N_2,\lambda_2}$ and $\zeta \in \supp_{\xi} u_{N_3,\lambda_3}$.
For the former case, it follows from the H\"{o}lder's inequality, the Strichartz estimate \eqref{eq:linearest1}, and the bilinear estimate \eqref{eq:bilinearest2} that
\begin{align*}
& \sum_{\lambda_1, \lambda_2, \lambda_{3} \in 2^{\Z}}\lambda_3
\Bigl| \iint u_{N_1,\lambda_1} u_{N_2,\lambda_2}  u_{N_3,\lambda_3} dx dt \Bigr|\\
 & \leq \sum_{\lambda_i \leq N_i (i=1,2,3)}\lambda_3
\|  P_{N_3} Q_{\lambda_3} (  u_{N_1,\lambda_1} u_{N_2,\lambda_2}) \|_{L_t^{q'} L_{x'}^{r'} L_{x_1}^2} \|u_{N_3,\lambda_3}\|_{L_t^{q} L_{x'}^{r} L_{x_1}^2}\\
& \lesssim \sum_{\lambda_i \leq N_i (i=1,2,3)} \lambda_{\max}^{\frac{d-1}{d-3}\varepsilon}
\lambda_{\min}^{\frac{1}{d-3}-\frac{d-1}{d-3} \varepsilon} N_{\max}^{-\frac12 + \frac{1}{d-3}} N_{\min}^{\frac{d-3}{2} - \frac{2}{d-3}}
\|u_{N_1,\lambda_1} \|_{V^2_S}
\| u_{N_2,\lambda_2}\|_{V_S^{2}} \|u_{N_3,\lambda_3}\|_{V^2_S}\\
& \leq N_{\min}^{s_c + \frac12 -\frac{1}{d-3}- \frac{d-1}{d-3} \varepsilon}N_{\max}^{- \frac12 + \frac{1}{d-3}+\frac{d-1}{d-3} \varepsilon}
\|u_{N_1} \|_{V^2_S}
\| u_{N_2}\|_{V_S^{2}} \|u_{N_3}\|_{V^2_S}.
\end{align*}
Here, the pair $(q,r)$ should satisfy the hypothesis of Proposition \ref{prop:inter}, and we have used $\lambda_{\max} \leq N_{\max}$ and
$\lambda_{\min}\leq N_{\min}$.
In the similar way, the latter case is treated as follows:
\begin{align*}
& \sum_{\lambda_1, \lambda_2, \lambda_{3} \in 2^{\Z}}\lambda_3
\Bigl| \iint u_{N_1,\lambda_1} u_{N_2,\lambda_2}  u_{N_3,\lambda_3} dx dt \Bigr|\\
 & \leq \sum_{\lambda_i \leq N_i (i=1,2,3)} \lambda_3
\|  P_{N_1} Q_{\lambda_1} (  u_{N_2,\lambda_2} u_{N_3,\lambda_3}) \|_{L_t^{q'} L_{x'}^{r'} L_{x_1}^2} \|u_{N_1,\lambda_1}\|_{L_t^{q} L_{x'}^{r} L_{x_1}^2}\\
& \leq N_{\min}^{s_c + \frac12 -\frac{1}{d-3}- \frac{d-1}{d-3} \varepsilon}N_{\max}^{- \frac12 + \frac{1}{d-3}+\frac{d-1}{d-3} \varepsilon}
\|u_{N_1} \|_{V^2_S}
\| u_{N_2}\|_{V_S^{2}} \|u_{N_3}\|_{V^2_S}.
\end{align*}
Finally, we explain why \eqref{eq:prop2.2-1} implies Proposition \ref{prop:d6}. By duality, see e.g.\ \cite[Lemma 7.3]{CH18}, we obtain
\begin{align*}
\|P_{N_3} \mathcal{I}\big(\partial_{x_1}( P_{N_1} u_1 P_{N_2} u_2)\big) \|_{V^2_S}
\lesssim N_{\min}^{s_c}\Big(\frac{N_{\min}}{N_{\max}}\Big)^{ \varepsilon}  \| P_{N_1}u_{1} \|_{V^2_S}\| P_{N_2}u_{2} \|_{V^2_S}.
\end{align*}
This can be easily summed up.
\end{proof}
Again, the proof of Theorem \ref{thm:d6} is a straight-forward application of the contraction mapping principle. The scattering claim follows from the well-known fact that functions in $V^2$ have limits at $\pm \infty$.

\section{Radial Strichartz estimates and the proof of Theorem \ref{thm:d4rad}}\label{sec:rad}

We first prove a variant of the Strichartz estimates in \ref{th:Strichartz} for functions which, for fixed $x_1$, are radial in $x'$.
\begin{thm}\label{thm:RadStrichartz}
Let $d \geq 3$ and  $2 \leq q$,$r \leq \infty$ satisfy
\[\frac{2}{q} \leq (2 d -3) \Bigl( \frac12 - \frac1r \Bigr), \quad (d,q,r)\not=(3,2,\infty), \quad (q,r)\not=\Big(2,\frac{2(2d-3)}{2d-5}\Big),
\]
and let $\sigma=-\frac{d-1}2+\frac{d-1}r +\frac2q$.
Then, for all functions $f \in L^2_{\textnormal{rad}}(\R^d)$, we have
\begin{equation}
\| D_{x_1}^{\frac1q} |\nabla_{x'}|^\sigma e^{t S} f \|_{L_t^{q} L_{x'}^{r} {{L}_{x_1}^2}} \lesssim \| f \|_{L_x^2}.
\label{eq:homogeneousRadStrichartz}
\end{equation}
\end{thm}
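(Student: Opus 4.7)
The plan is to mimic the proof of Theorem~\ref{th:Strichartz} step by step, replacing the Keel--Tao Schr\"odinger estimate on $\R^{d-1}$ by its radial refinement (due to Shao, Guo--Wang, Ke and others). The key input is that for any radial $g \in L^2(\R^{d-1})$ one has
\[
\| |\nabla_{x'}|^\sigma e^{it \Delta_{x'}} g \|_{L_t^q L_{x'}^r} \lesssim \| g \|_{L_{x'}^2},
\]
with $\sigma = \tfrac{d-1}{r}+\tfrac{2}{q}-\tfrac{d-1}{2}$, whenever $2 \leq q, r \leq \infty$, $\tfrac{2}{q} \leq (2d-3)(\tfrac12 - \tfrac1r)$, and the pairs $(d-1,q,r)=(2,2,\infty)$ and $(q,r)=(2, \tfrac{2(2d-3)}{2d-5})$ are excluded. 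These exclusions correspond precisely to the two forbidden cases in the statement.

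Next, for fixed $\xi_1 \in \R$ I would reuse the time-rescaling identity $V_{\xi_1}(t) = e^{-it \xi_1 \Delta_{x'}}$ from the proof of Theorem~\ref{th:Strichartz}. The change of variable $s=-t\xi_1$ converts the radial Schr\"odinger estimate into
\[
\| |\xi_1|^{\frac1q} |\nabla_{x'}|^\sigma V_{\xi_1}(t) g \|_{L_t^q L_{x'}^r} \lesssim \|g\|_{L^2_{x'}}
\]
for radial $g$, the factor $|\xi_1|^{1/q}$ being the Jacobian of the time change. The crucial observation to apply this is that partial Fourier transformation in $x_1$ preserves radiality in $x'$: if $f \in L^2_{\mathrm{rad}}(\R^d)$, then $\F_{x_1} f(\xi_1, \cdot)$ is radial in $x'$ for almost every $\xi_1$, so the displayed bound applies with $g = \F_{x_1} f(\xi_1, \cdot)$.

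Finally, I would combine this with Plancherel in $x_1$ and Minkowski's inequality, which applies since $q,r\geq 2$ lets us move $L^2_{\xi_1}$ from the innermost to the outermost slot, to obtain
\begin{align*}
\| D_{x_1}^{\frac{1}{q}} |\nabla_{x'}|^\sigma e^{tS} f \|_{L_t^q L_{x'}^r L_{x_1}^2}
&\leq \Bigl( \int_\R \| |\xi_1|^{\frac{1}{q}} |\nabla_{x'}|^\sigma V_{\xi_1}(t) \F_{x_1} f(\xi_1, \cdot) \|_{L_t^q L_{x'}^r}^2 \, d\xi_1 \Bigr)^{\frac{1}{2}}\\
&\lesssim \| f \|_{L_x^2},
\end{align*}
which is \eqref{eq:homogeneousRadStrichartz}. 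The whole argument is a direct transplant of the proof of Theorem~\ref{th:Strichartz}; there is no genuine ``hard step'', and the only substantive point is the bookkeeping that matches the admissible range and the two excluded endpoints of the radial Strichartz estimate on $\R^{d-1}$ with those in the statement.
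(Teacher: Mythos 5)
Your proof is correct and follows exactly the same route the paper takes: the authors explicitly state that Theorem~\ref{thm:RadStrichartz} ``follows the exact same lines as the proof of Theorem~\ref{th:Strichartz}, but with the Strichartz estimates for the $(d-1)$-dimensional Schr\"odinger equation from \cite{Keel-Tao} replaced by the radial version.'' The bookkeeping you carry out (time rescaling by $|\xi_1|$, the observation that $\F_{x_1}$ preserves radiality in $x'$, Plancherel plus Minkowski using $q,r\geq 2$, and the matching of the $(d-1)$-dimensional admissible range and excluded endpoints) is precisely the argument the paper has in mind.
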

The proof follows the exact same lines as the proof of Theorem \ref{th:Strichartz}, but with the Strichartz estimates for the $(d-1)$-dimensional Schr\"odinger equation from \cite{Keel-Tao} replaced by the radial version obtained in \cite[Theorem 1.1]{CL13}.
\begin{lem}\label{lem:Radtrans}
Let $d\geq 2$ and $f_{N_1,\lambda_1,M_1} = R_{M_1} Q_{\lambda_1} P_{N_1} f$, $g_{N_2,\lambda_2,M_2} = R_{M_2} Q_{\lambda_2} P_{N_2} g$. (i) Suppose that there exists $\ell \in \{2,\ldots,d\}$ such that
\[
|\partial_{\ell} \varphi(\xi)-\partial_{\ell} \varphi(\eta)|\gtrsim N_{\max}^2,
\]
for all $\xi \in \supp \widehat{f}_{N_1,\lambda_1,M_1}$, $\eta \in \supp \widehat{g}_{N_2,\lambda_2,M_2}$. Then it holds that
\begin{equation}\label{eq:Radtransversest1}
\begin{split}
& \|P_{N_3}(e^{tS}f_{\lambda_1,N_1,M_1} \, e^{tS}g_{\lambda_2,N_2,M_2}) \|_{L_t^2 L_x^2} \\
& \lesssim \Big(\frac{\min \{\lambda_1,\lambda_2 \} \min \{M_{1},M_2\}^{d-2}}{ N_{\max}^2}\Big)^{\frac12} \|f_{M_1,\lambda_1,N_1}\|_{L^2}
\|g_{M_2,\lambda_2,N_2} \|_{L^2}.
\end{split}
\end{equation}
(ii) Suppose that 
\[
|\partial_{1} \varphi(\xi)-\partial_{1} \varphi(\eta)|\gtrsim N_{\max}^2,
\]
for all $\xi \in \supp \widehat{f}_{N_1,\lambda_1,M_1}$, $\eta \in \supp \widehat{g}_{N_2,\lambda_2,M_2}$. Then it holds that
\begin{equation}\label{eq:Radtransversest2}
\begin{split}
& \|P_{N_3}(e^{tS}f_{\lambda_1,N_1,M_1} \, e^{tS}g_{\lambda_2,N_2,M_2}) \|_{L_t^2 L_x^2} \\
& \lesssim \Big(\frac{ \min \{M_{1},M_2\}^{d-1}}{ N_{\max}^2}\Big)^{\frac12} \|f_{M_1,\lambda_1,N_1}\|_{L^2}
\|g_{M_2,\lambda_2,N_2} \|_{L^2}.
\end{split}
\end{equation}
\end{lem}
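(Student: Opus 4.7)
My plan is to carry out the classical bilinear Fourier restriction estimate, refined to exploit the angular frequency localization provided by $R_M$. Let $\Psi_\zeta(\xi):=\varphi(\xi)+\varphi(\zeta-\xi)$ be the resonance function. First, Plancherel in $(t,x)$ together with Cauchy--Schwarz (applied with a symmetric split of the Jacobian weight $|\nabla_\xi\Psi_\zeta|^{-1}$) and the coarea formula reduces the desired estimate to
\[
\|P_{N_3}(e^{tS}f_{N_1,\lambda_1,M_1}\,e^{tS}g_{N_2,\lambda_2,M_2})\|_{L^2_{t,x}}^2 \lesssim \Big(\sup_{\tau,\zeta}\int_{E_{\tau,\zeta}}\frac{d\mathcal{H}^{d-1}(\xi)}{|\nabla_\xi\Psi_\zeta(\xi)|}\Big)\|f_{N_1,\lambda_1,M_1}\|_{L^2}^2\|g_{N_2,\lambda_2,M_2}\|_{L^2}^2,
\]
where
\[
E_{\tau,\zeta}:=\{\xi\in\supp\widehat{f}_{N_1,\lambda_1,M_1}:\zeta-\xi\in\supp\widehat{g}_{N_2,\lambda_2,M_2},\,\Psi_\zeta(\xi)=-\tau\},
\]
and the hypothesis provides $|\nabla_\xi\Psi_\zeta|=|\nabla\varphi(\xi)-\nabla\varphi(\zeta-\xi)|\gtrsim N_{\max}^2$.

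Since the transversality is directional, I would exploit it by parameterizing $E_{\tau,\zeta}$ as a Lipschitz graph in the specific direction $\ell$ supplied by the hypothesis. If $|\partial_{\xi_\ell}\Psi_\zeta|\gtrsim N_{\max}^2$, then
\[
\int_{E_{\tau,\zeta}}\frac{d\mathcal{H}^{d-1}(\xi)}{|\nabla_\xi\Psi_\zeta(\xi)|}=\int_{\pi_\ell(E_{\tau,\zeta})}\frac{\prod_{j\neq\ell}d\xi_j}{|\partial_{\xi_\ell}\Psi_\zeta|}\lesssim\frac{|\pi_\ell(E_{\tau,\zeta})|}{N_{\max}^2},
\]
where $\pi_\ell$ denotes the coordinate projection removing $\xi_\ell$. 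The task reduces to bounding $|\pi_\ell(E_{\tau,\zeta})|$ using the frequency constraints $|\xi_1|\sim\lambda_1$, $|\xi'|\sim M_1$, $|\zeta_1-\xi_1|\sim\lambda_2$, $|\zeta'-\xi'|\sim M_2$.

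I would then treat the two cases separately. In case (ii), $\ell=1$ and $\pi_1(E_{\tau,\zeta})\subset\R^{d-1}$ is contained in $\{|\xi'|\sim M_1\}\cap\{|\zeta'-\xi'|\sim M_2\}$; the measure of this annular intersection is controlled by that of the smaller annulus, yielding $\lesssim\min(M_1,M_2)^{d-1}$ and hence \eqref{eq:Radtransversest2}. In case (i), $\ell\in\{2,\ldots,d\}$ and the remaining coordinates split into $\xi_1$, which ranges in an interval of length $\min(\lambda_1,\lambda_2)$, and $(\xi_j)_{j\neq 1,\ell}\in\R^{d-2}$, which lies in the $\xi_\ell$-projection of $\{|\xi'|\sim M_1\}\cap\{|\zeta'-\xi'|\sim M_2\}$. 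Here I would use the elementary observation that the projection of the annulus $\{|\xi'|\sim M\}\subset\R^{d-1}$ onto any coordinate hyperplane is a ball of radius $\sim M$, of volume $\sim M^{d-2}$; hence the intersection of the two projections has volume $\lesssim\min(M_1,M_2)^{d-2}$. Combining gives $|\pi_\ell(E_{\tau,\zeta})|\lesssim\min(\lambda_1,\lambda_2)\min(M_1,M_2)^{d-2}$, i.e.\ \eqref{eq:Radtransversest1}.

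The only nontrivial step is the geometric observation in case (i): the annulus $\{|\xi'|\sim M\}$, of volume $\sim M^{d-1}$, collapses under projection along any coordinate $\xi_\ell$ with $\ell\geq 2$ to a ball of volume $\sim M^{d-2}$, which saves a factor $M_{\min}$ relative to a naive box bound and accounts for the appearance of $\min(M_1,M_2)^{d-2}$ rather than $\min(M_1,M_2)^{d-1}$. Everything else is the standard Plancherel/coarea machinery underlying Lemma \ref{lem:trans}.
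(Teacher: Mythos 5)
Your proof is correct and complete at the usual level of detail for such estimates. The paper itself gives no proof of Lemma~\ref{lem:Radtrans}, instead pointing to the general transversal bilinear estimate in \cite[Lemma 2.6]{CH18}; your argument is, in effect, a self-contained specialization of the proof of that general lemma. The Plancherel/Cauchy--Schwarz reduction to $\sup_{\tau,\zeta}\int_{E_{\tau,\zeta}}|\nabla_\xi\Psi_\zeta|^{-1}\,d\mathcal H^{d-1}$ is the standard starting point, and you correctly use the hypothesis as a \emph{directional} lower bound $|\partial_{\xi_\ell}\Psi_\zeta|\gtrsim N_{\max}^2$ so that the level set can be written as a graph over the $\xi_\ell$-coordinate hyperplane (the multiplicity of this parametrization is $O(1)$ since $\Psi_\zeta$ is a cubic, a point usually left implicit). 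The geometry then reduces to a shadow volume bound, and your accounting is right: for $\ell\geq 2$, the projected constraint set factors as (an interval in $\xi_1$ of length $\lesssim\min(\lambda_1,\lambda_2)$) times (the intersection in $\R^{d-2}$ of two balls of radii $\lesssim M_1,M_2$), giving $\min(\lambda_1,\lambda_2)\min(M_1,M_2)^{d-2}$; for $\ell=1$, the shadow lives in $\xi'\in\R^{d-1}$ and is controlled by the smaller annulus, giving $\min(M_1,M_2)^{d-1}$. After taking the square root, this reproduces \eqref{eq:Radtransversest1} and \eqref{eq:Radtransversest2} exactly. The only thing I'd clarify in your write-up is the remark about a ``savings'' factor: the point is not that an annulus projects to something smaller than a box of the same side-length would, but simply that in case (i) the coordinate being integrated out comes from the $(d-1)$-dimensional $\xi'$-block (leaving $d-2$ angular coordinates, worth $M_{\min}^{d-2}$, plus $\xi_1$, worth $\lambda_{\min}$), whereas in case (ii) it is $\xi_1$ that is integrated out (leaving the full $d-1$ angular coordinates, worth $M_{\min}^{d-1}$). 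This is what accounts for the different exponents in the two cases.
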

As above, the proof of this lemma follows from \cite[Lemma 2.6]{CH18}. As above, it immediately extends to $U^2_S$-functions.

Let $Y^{s}_{\textnormal{rad}}$ and $\dot{Y}^{s}_{\textnormal{rad}}$ be the subspaces of  $Y^{s}$ and $\dot{Y}^{s}$ of functions which, for fixed $x_1$, are radial in $x'$, with the same norms. Then, the key for the proof of Theorem \ref{thm:d4rad} is the following
\begin{prop}\label{prop:bil4drad}
Let $d \geq 4$. Then we have
\[
\| \mathcal{I} (\partial_{x_1}(u_1u_2)) \|_{Y^{s_c}_{\textnormal{rad}}} \lesssim \|u_1 \|_{Y^{s_c}_{\textnormal{rad}}} \| u_2 \|_{Y^{s_c}_{\textnormal{rad}}}, \quad
\| \mathcal{I} (\partial_{x_1}(u_1u_2)) \|_{\dot{Y}^{s_c}_{\textnormal{rad}}} \lesssim \|u_1 \|_{\dot{Y}^{s_c}_{\textnormal{rad}}} \| u_2 \|_{\dot{Y}^{s_c}_{\textnormal{rad}}}.
\]
\end{prop}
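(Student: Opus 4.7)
The plan is to follow the outline of Proposition~\ref{prop:d6}, substituting the non-radial Strichartz estimate (Theorem~\ref{th:Strichartz}) and bilinear transversal estimate (Lemma~\ref{lem:trans}) with their radial counterparts Theorem~\ref{thm:RadStrichartz} and Lemma~\ref{lem:Radtrans}. By the standard duality argument of \cite[Lemma~7.3]{CH18}, both estimates reduce to the trilinear bound
\[
\Bigl| \iint P_{N_1} u_1 \, P_{N_2} u_2 \, \partial_{x_1} P_{N_3} u_3 \, dx \, dt \Bigr|
\lesssim N_{\min}^{s_c + \varepsilon} N_{\max}^{-\varepsilon} \prod_{i=1}^{3} \| P_{N_i} u_i \|_{V^2_S}
\]
for some $\varepsilon > 0$, which then sums dyadically. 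Since $P_N$, $Q_\lambda$ and $R_M$ all preserve radiality in $x'$, each factor $P_{N_i} u_i$ further decomposes into radial pieces $u_{N_i, \lambda_i, M_i} = R_{M_i} Q_{\lambda_i} P_{N_i} u_i$ with $\lambda_i \lesssim N_i$ and $M_i \lesssim N_i$, on which both radial tools apply.

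The first substantive step is to verify the strong transversality hypothesis of Lemma~\ref{lem:Radtrans}, namely $|\partial_\ell \varphi(\xi) - \partial_\ell \varphi(\eta)| \gtrsim N_{\max}^2$ for some index $\ell$ and for at least one of the three frequency pairs. When $M_{\max} \ll N_{\max}$, this is already contained in Case~1 of the proof of Lemma~\ref{lem2.5}, where the $\partial_1 \varphi$ component yields $\gtrsim N_{\max}^2$, matching hypothesis (ii) of Lemma~\ref{lem:Radtrans}. When $M_{\max} \sim N_{\max}$, the annular supports forced by $R_M$ do not fit into the box condition \eqref{assumption:inclusions}, so one must replace that condition by an angular cover of each annulus $\{|\xi'| \sim M_i\}$ and verify transversality sector by sector; the claim is that after this further angular decomposition, on at least one pair of sectors one of the $\partial_j \varphi$, $j \geq 2$, attains a lower bound of order $N_{\max}^2$, placing us in hypothesis (i) of Lemma~\ref{lem:Radtrans}.

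The second step combines Hölder's inequality in the mixed norm $L^q_t L^r_{x'} L^2_{x_1}$, Theorem~\ref{thm:RadStrichartz} applied to one factor, and Lemma~\ref{lem:Radtrans} applied to the remaining bilinear product in $L^2_{t,x}$, with the $U^p$-$V^p$ convexity/interpolation argument of Proposition~\ref{prop:inter} translating the $U^{q}_S$ output into a $V^2_S$ bilinear bound. Choosing $(q,r)$ inside the enlarged radial admissible range $\frac{2}{q} \leq (2d-3)\bigl(\frac12 - \frac1r\bigr)$, close to the excluded endpoint $\bigl(2, \tfrac{2(2d-3)}{2d-5}\bigr)$, and then summing over $\lambda_i \lesssim N_i$, $M_i \lesssim N_i$, produces the required trilinear estimate in both the inhomogeneous and homogeneous versions.

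The principal obstacle is balancing the angular regularity loss $|\nabla_{x'}|^\sigma$ with $\sigma = -\tfrac{d-1}{2} + \tfrac{d-1}{r} + \tfrac{2}{q}$ built into Theorem~\ref{thm:RadStrichartz} against the improved denominator $N_{\max}^2$ (as opposed to the non-radial $\lambda_{\max} N_{\max}$) produced by Lemma~\ref{lem:Radtrans}. The angular loss is absorbed as $M_{\max}^\sigma \lesssim N_{\max}^\sigma$ and must be paid for out of the transversal gain; this budget is tightest when $d = 4$, where $s_c = 0$ leaves no Sobolev margin, so $(q,r)$ has to be pushed very close to the excluded endpoint while keeping a positive $\varepsilon$ margin. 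The geometric verification of $\partial_\ell \varphi$-transversality in the regime $M_{\max} \sim N_{\max}$, and particularly when also $\lambda_{\max} \ll N_{\max}$, is the most delicate component of the proof, as it relies essentially on the angular decomposition forced by radiality.
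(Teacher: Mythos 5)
Your high-level plan (duality reduction to a trilinear estimate, further decomposition by $\lambda_i$ and $M_i$, then a mix of radial Strichartz, radial transversal estimates, and $U^p$--$V^p$ interpolation) is broadly in the spirit of the paper, and the duality and summation mechanics are the same. However, there are two substantial gaps in the middle step that would block the proof as you describe it.

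First, you do not use any modulation decomposition. The paper's proof splits into three regimes according to which of $M_1, M_2$ are comparable to $N_1 \sim N_2$ (not according to $M_{\max}$ versus $N_{\max}$), and in two of its sub-cases --- $\lambda_3 \ll \lambda_1$ in Case~(2), and $M_1 \ll (\lambda_3 N_1)^{1/2}$ in Case~(3) --- no transversality bound of the form in Lemma~\ref{lem:Radtrans} is available; instead, the resonance function $|\varphi(\xi+\eta)-\varphi(\xi)-\varphi(\eta)|$ is bounded below (by $N_1^3$ or $\lambda_3 N_1^2$ respectively), forcing one factor to have large modulation, and that factor is then estimated in $L^2_{t,x}$ with a modulation gain while the other two are handled by radial Strichartz/Bernstein. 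Since you never mention modulation, those sub-cases are not covered by your argument.

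Second, your plan for the $M_{\max} \sim N_{\max}$ regime --- to further decompose each annulus $\{|\xi'| \sim M_i\}$ into angular sectors and then locate a direction $\ell \geq 2$ with $|\partial_\ell\varphi(\xi) - \partial_\ell\varphi(\eta)| \gtrsim N_{\max}^2$ --- cannot work uniformly, because transversality genuinely fails in that regime. Take $\xi = (\xi_1,\xi')$ and $\eta = (\eta_1,-\xi')$ with $|\xi'| \sim N_1$ and $\xi_1+\eta_1$ small: then $\partial_\ell\varphi(\xi) - \partial_\ell\varphi(\eta) = 2(\xi_1+\eta_1)\xi_\ell$ is small for every $\ell \geq 2$, and $\partial_1\varphi(\xi)-\partial_1\varphi(\eta)$ can also be made small, so neither hypothesis of Lemma~\ref{lem:Radtrans} holds. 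The paper's Case~(1) ($M_1 \sim N_1$, $M_2 \sim N_2$) therefore uses no transversal estimate at all: it is a direct trilinear H\"older in $L^{q}_t L^{r}_{x'} L^2_{x_1}$ with the radial Strichartz bounds \eqref{est:RadStrichartz-1} and \eqref{est:RadStrichartz-3} plus a Bernstein gain of $\lambda_{\min}^{1/2}$. You should replace your proposed angular-decomposition step by this direct estimate when both $M$'s are at the top scale, and add the modulation-gain argument for the sub-cases where transversality is unavailable.
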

\begin{proof}
For $i=1,2,3$, we use $u_{i} := R_{M_i} Q_{\lambda_i} P_{N_i} u $. As in the proof of Proposition \ref{prop:d6}, it suffices to show
\begin{equation}\label{eq:prop4.2-01}
\sum_{\lambda_i, M_i} \lambda_{\max} \Bigl| \iint u_1 u_{2}
u_{3} dx dt \Bigr|
\lesssim N_{\min}^{s_c + \varepsilon} N_{\max}^{- \varepsilon}  \prod_{i=1}^3 \| u_{N_i} \|_{V^2_S}.
\end{equation}
Here and in the sequel, all functions are implicitely assumed to satisfy the radiality hypothesis.
Let
\begin{align*}
& \Bigl(\frac{1}{q_1}, \frac{1}{r_1} \Bigr) = \Bigl( \frac12 - \varepsilon, \frac{2d-5}{2(2d-3)} \Bigr),\\
& \Bigl(\frac{1}{q_2}, \frac{1}{r_2} \Bigr) = \Bigl( \frac{(d-1)(2 d-3)}{2 (d-1+ 2 (2d-3) \varepsilon)} \varepsilon, \frac{d-1}{2 (d-1+ 2 (2d-3) \varepsilon)} \Bigr),\\
& \Bigl(\frac{1}{q_3}, \frac{1}{r_3} \Bigr) = \Big(2 \varepsilon, \frac{2}{2d-3}\Bigr).
\end{align*}
Then we have
\begin{align}
& \| R_{M} Q_{\lambda} P_{N} u \|_{L_t^{q_1} L_{x'}^{r_1} L_{x_1}^2} \lesssim \lambda^{- \frac{1}{q_1}} M^{-\frac{d-2}{2 d-3} + 2 \varepsilon} \|R_{M} Q_{\lambda} P_{N} u\|_{U^{q_1}_S},\label{est:RadStrichartz-1}\\
& \| R_{M} Q_{\lambda} P_{N} u\|_{L_t^{q_2} L_{x'}^{r_2} L_{x_1}^2} \lesssim \lambda^{- \frac{1}{q_2}} \|R_{M} Q_{\lambda} P_{N} u\|_{U^{q_2}_S},\label{est:RadStrichartz-2}\\
& \| R_{M} Q_{\lambda} P_{N} u\|_{L_t^{q_3} L_{x'}^{r_3} L_{x_1}^2} \lesssim \lambda^{- \frac{1}{q_3}} M^{\frac{(d-1)(2 d-7)}{2 (2 d-3)} - 4 \varepsilon}\|R_{M} Q_{\lambda} P_{N} u\|_{U^{q_3}_S}.\label{est:RadStrichartz-3}
\end{align}
By symmetry of \eqref{eq:prop4.2-01} we may assume $N_3 \lesssim N_1 \sim N_2$, $\lambda_2 \lesssim \lambda_1$, and then it is enough to consider the following three cases:
\\
(1) $M_1 \sim N_1$, $M_2 \sim N_2$. (2) $M_1 \sim N_1$, $M_2 \ll N_1$, (3) $M_1 \ll N_1$, $M_2 \ll N_1$.

(1) First, we assume $M_1 \sim N_1$, $M_2 \sim N_2$. By using \eqref{est:RadStrichartz-1} and \eqref{est:RadStrichartz-3} we obtain
\begin{align*}
&\Bigl| \iint u_1 u_{2}
u_{3} dx dt \Bigr|\\
 &  \lesssim \lambda_{\min}^{\frac12} \|u_1 \|_{L_t^{q_1} L_{x'}^{r_1} L_{x_1}^2}
\|u_{2} \|_{L_t^{q_1} L_{x'}^{r_1} L_{x_1}^2} \|u_{3} \|_{L_t^{q_3} L_{x'}^{r_3} L_{x_1}^2} \\
& \lesssim \lambda_{\min}^{\frac12} \lambda_1^{-\frac{1}{q_1}} \lambda_2^{-\frac{1}{q_1}} \lambda_3^{-2 \varepsilon} M_3^{\frac{(d-1)(2 d-7)}{2 (2 d-3)} - 4 \varepsilon} N_1^{-\frac{2(d-2)}{2d-3}+4 \varepsilon} \prod_{i=1,2,3} \|u_i \|_{V^2_S}\\
& \lesssim \lambda_{\min}^{\varepsilon} \lambda_{\max}^{-1}M_3^{s_c+\varepsilon}N_1^{-2 \varepsilon} \prod_{i=1,2,3} \|u_i \|_{V^2_S},
\end{align*}
which completes \eqref{eq:prop4.2-01}.

(2) In the case $M_1 \sim N_1$, $M_2 \ll N_1$, it is observed that $\lambda_2 \sim M_3 \sim N_1$. Then, without loss of generality, we may assume $\lambda_3 \lesssim \lambda_1 \sim N_1$.
In the case $\lambda_3 \ll \lambda_1$, for all $\xi \in \supp_{\xi} \widehat{u}_{1}$, $\eta \in \supp_{\xi} \widehat{u}_{2}$ such that $\xi+\eta \in \supp_{\xi} \widehat{u}_3$, we observe
\begin{align*}
& |\tau_1- \varphi(\xi)|+|\tau_2 - \varphi(\eta)|+ |\tau_1+\tau_2 - \varphi(\xi+\eta)|\\
 \gtrsim{} &
|\varphi(\xi+\eta) - \varphi(\xi) - \varphi(\eta)| \\
\gtrsim{} & \bigl| \xi_1|\xi'|^2 \bigr| - |\xi_1+\eta_1| \, \bigl| |\xi+ \eta|^2 + \xi_1^2 - \xi_1 \eta_1 + \eta_1^2 \bigr| - \bigl|\eta_1 | \eta'|^2 \bigr|  \gtrsim N_1^3.
\end{align*}
Thus we can assume that at least one of $u_1$, $u_2$, $u_3$ satisfies $\supp \widehat{u}_i \subset \{ (\tau,\xi)\, |\, |\tau-\varphi(\xi)| \gtrsim N_1^3\}$.
We easily see that this condition verifies the claim by utilizing Theorem \ref{th:Strichartz} and \eqref{eq:linearest1}.
For example, if  $\supp \widehat{u}_1 \subset \{ (\tau,\xi)\, |\, |\tau-\varphi(\xi)| \gtrsim N_1^3\}$, using Bernstein's inequality and Theorem \ref{thm:RadStrichartz} we obtain
\begin{align*}
\Bigl| \iint u_1 u_{2}  u_{3} dx dt \Bigr| & \lesssim \|u_1 \|_{L_t^2 L_x^2} \|u_2 \|_{L_t^4 L_{x'}^{2(d-1)} L_{x_1}^2}  \| u_3\|_{L_t^4 L_{x'}^{\frac{2(d-1)}{d-2}} L_{x_1}^{\infty}}\\
& \lesssim N_1^{-\frac32} \|u_1 \|_{V_S^2} M_2^{\frac{d-3}{2}} \| u_2 \|_{L_t^4 L_{x'}^{\frac{2(d-1)}{d-2}} L_{x_1}^2}
 \lambda_3^{\frac12} \| u_3 \|_{L_t^4 L_{x'}^{\frac{2(d-1)}{d-2}} L_{x_1}^2}\\
&  \lesssim \lambda_3^{\frac14} M_2^{s_c + \frac12} N_1^{- \frac74} \prod_{i=1,2,3}\|u_i \|_{V_S^2}.
\end{align*}
Next we consider the case $\lambda_1 \sim \lambda_2 \sim \lambda_3 \sim N_1$.
Since $M_2 \ll M_1 \sim \lambda_1$, we may assume that there exists $\ell \in \{2, \ldots, d\}$ such that $|\partial_{\ell} \varphi(\xi)-\partial_{\ell} \varphi(\eta)| \gtrsim N_1^2$ for $\xi \in \supp_{\xi} \widehat{u}_1$, $\eta \in \supp_{\xi} \widehat{u}_2$. 
Then, from \eqref{eq:Radtransversest1} we get
\begin{equation}
\|u_1 u_2 \|_{L_t^2 L_x^2} \lesssim M_2^{\frac{d-2}{2}} N_1^{- \frac12} \|u_1\|_{U^2_S} \|u_2\|_{U_S^2}.
\end{equation}
On the other hand, for $(\frac{1}{\alpha}, \frac{1}{\beta}) = (\frac{1}{q_1}+\frac{1}{q_2}, \frac{1}{r_1} + \frac{1}{r_2})$, we have
\begin{align}
\|u_1 u_2 \|_{L_t^{\alpha} L_{x'}^{\beta}L_{x_1}^2} & \lesssim \lambda_1^{\frac12} \|u_1\|_{L_t^{q_1} L_{x'}^{r_1} L_{x_1}^2} \|u_2\|_{L_t^{q_2} L_{x'}^{r_2} L_{x_1}^2}\\
& \lesssim \lambda_1^{\frac12 -\frac{1}{q_1}-\frac{1}{q_2}} N_1^{-\frac{d-2}{2d-3}+2 \varepsilon} \|u_1\|_{U_S^{q_1}} \|u_2\|_{U^{q_2}_S}\\
& \sim  N_1^{-\frac{1}{q_2}-\frac{d-2}{2d-3}+3 \varepsilon} \|u_1\|_{U_S^{q_1}} \|u_2\|_{U^{q_2}_S}.
\end{align}
We notice that $
\alpha, \beta\geq 1$ and $q_1,q_2>2$ if $
\varepsilon>0$ is chosen sufficiently small.
Let $\theta = \frac{2(d-1)+ 4 (2 d-3)\varepsilon}{(d-1)(2 d -5) - 4 (2 d-3)\varepsilon}$. Then, since
\[
\frac{\theta}{\alpha} + \frac{1- \theta}{2} =  \frac{1}{q_1'}, \quad
\frac{\theta}{\beta} + \frac{1- \theta}{2} = \frac{1}{r_1'},
\]
by interpolating the above two estimates (with a similar argument as in the proof of Proposition \ref{prop:d6}), we have
\begin{equation}
\|u_1 u_2 \|_{L_t^{q_1'} L_{x'}^{r_1'}L_{x_1}^2} \lesssim  M_2^{\frac{d-2}{2}(1-\theta)}
N_1^{-\frac12 - \frac{\theta}{q_2} + \frac{\theta}{2 (2 d-3)}+ 3 \varepsilon \theta}\|u_1\|_{V^2_S} \|u_2\|_{V_S^2}.
\end{equation}
This and \eqref{est:RadStrichartz-1} yield
\begin{align*}
\Bigl| \iint u_1 u_{2}
u_{3} dx dt \Bigr|
 &  \lesssim \|u_1 u_2 \|_{L_t^{q_1'} L_{x'}^{r_1'}L_{x_1}^2} \|u_{3} \|_{L_t^{q_1} L_{x'}^{r_1} L_{x_1}^2} \\
& \lesssim M_2^{\frac{d-2}{2}(1-\theta)}
N_1^{-\frac{3 d-5}{2 d-3}- \frac{\theta}{q_2} + \frac{\theta}{2 (2 d-3)}+ 3 \varepsilon (1+\theta)}
\prod_{i=1,2,3}\|u_i\|_{V^2_S}\\
& \lesssim M_2^{s_c+\varepsilon}N_1^{-1 - \varepsilon} \prod_{i=1,2,3} \|u_i \|_{V^2_S}.
\end{align*}

(3) We deal with the last case $M_1 \ll N_1$, $M_2 \ll N_1$. By symmetry, we assume $M_2 \leq M_1$. Assume first that $M_1 \gtrsim (\lambda_3 N_1)^{\frac12}$ which implies $\lambda_3 \ll \lambda_1 \sim \lambda_2 \sim N_1$. Thus, we observe that $|\partial_1 \varphi(\xi)-\partial_1 \varphi(\eta)| \gtrsim N_1^2$ for $\xi \in \supp_{\xi} \widehat{u}_2$, $\eta \in \supp_{\xi} \widehat{u}_3$. \eqref{eq:Radtransversest2} implies
\[
\|u_2 u_3 \|_{L_t^2 L_x^2} \lesssim M_{\min}^{\frac{d-1}{2}} N_1^{-1} \|u_2\|_{U^2_S} \|u_3\|_{U_S^2}.
\]
While, similarly to the above observation, we get
\[
\| u_2 u_3 \|_{L_t^{\alpha} L_{x'}^{\beta}L_{x_1}^2} \lesssim
\lambda_2^{-\frac{1}{q_1}} \lambda_3^{\frac12-\frac{1}{q_2}} M_{\min}^{-\frac{d-2}{2d-3}+2 \varepsilon} \|u_2\|_{U_S^{q_1}} \|u_3\|_{U^{q_2}_S}.
\]
Interpolating the above two, we get
\[
\|u_2 u_3 \|_{L_t^{q_1'} L_{x'}^{r_1'}L_{x_1}^2} \lesssim  \lambda_2^{-\frac{\theta}{q_1}} \lambda_3^{\frac{\theta}{2}-\frac{\theta}{q_2}} M_{\min}^{\frac{d-1}{2}(1-\theta)-\frac{d-2}{2 d-3} \theta + 2 \varepsilon \theta}  N_1^{-1+\theta} \|u_1\|_{V^2_S} \|u_2\|_{V_S^2}.
\]
Consequently, it follows from $M_1 \gtrsim \max\{M_{\min},(\lambda_3 N_1)^{\frac12}\}$ that
\begin{align*}
& \Bigl| \iint u_1 u_{2}
u_{3} dx dt \Bigr|\\
 &  \lesssim \|u_1 \|_{L_t^{q_1} L_{x'}^{r_1} L_{x_1}^2} \|u_2 u_3 \|_{L_t^{q_1'} L_{x'}^{r_1'}L_{x_1}^2}\\
& \lesssim \lambda_1^{-\frac{1+\theta}{q_1}} \lambda_3^{\frac{\theta}{2}-\frac{\theta}{q_2}}
M_1^{-\frac{d-2}{2d-3}+2 \varepsilon} M_{\min}^{\frac{d-1}{2}(1-\theta)-\frac{d-2}{2 d-3} \theta + 2 \varepsilon \theta}  N_1^{-1+\theta} \prod_{i=1,2,3}\|u_i\|_{V^2_S}\\
& \lesssim \lambda_1^{-1}\lambda_3^{\varepsilon} M_{\min}^{s_c+\varepsilon}N_1^{-2 \varepsilon} \prod_{i=1,2,3} \|u_i \|_{V^2_S}.
\end{align*}
In the case $M_1 \ll (\lambda_3 N_1)^{\frac12}$, we easily observe that at least one of $u_1$, $u_2$, $u_3$ satisfies $\supp \widehat{u}_i \subset \{ (\tau,\xi) \, |\, |\tau-\varphi(\xi)| \gtrsim \lambda_3 N_1^2\}$. 
Indeed, $M_2 \lesssim M_1 \ll (\lambda_3 N_1)^{\frac12}$ yields
\begin{align*}
& |\tau_1- \varphi(\xi)|+|\tau_2 - \varphi(\eta)|+ |\tau_1+\tau_2 - \varphi(\xi+\eta)|\\
 \gtrsim{} &
|\varphi(\xi+\eta) - \varphi(\xi) - \varphi(\eta)| \\
\geq{} & 3| \xi_1 \eta_1(\xi_1+\eta_1)| - 10(|\xi_1|+|\eta_1|)(|\xi'|^2 + |\eta'|^2) \gtrsim  \lambda_3 N_1^2,
\end{align*}
for all $\xi \in \supp_{\xi} \widehat{u}_1$, $\eta \in \supp_{\xi} \widehat{u}_2$ which satisfy $\xi+\eta \in \supp_{\xi} \widehat{u}_3$. 
In the case
$\supp \widehat{u}_1 \subset \{ (\tau,\xi)\, |\, |\tau-\varphi(\xi)| \gtrsim \lambda_3 N_1^2\}$ and $M_2 \lesssim M_3$,
since $M_2 \lesssim M_1 \ll (\lambda_3 N_1)^{\frac12}$, it follows from the Strichartz estimates \eqref{eq:linearest1} and Bernstein's inequality that
\begin{align*}
\Bigl| \iint u_1 u_{2}  u_{3} dx dt \Bigr| & \lesssim \|u_1 \|_{L_t^2 L_x^2} \|u_2 \|_{L_t^3 L_{x'}^{3(d-1)} L_{x_1}^2}  \| u_3\|_{L_t^{6} L_{x'}^{\frac{6(d-1)}{3 d-5}} L_{x_1}^{\infty}}\\
& \lesssim \lambda_3^{-\frac12} N_1^{-1} \|u_1 \|_{V_S^2} M_{2}^{\frac{d-3}{2}} \|u_2 \|_{L_t^3 L_{x'}^{\frac{6 (d-1)}{3 d -7}} L_{x_1}^2} \lambda_3^{\frac12}  \| u_3\|_{L_t^{6} L_{x'}^{\frac{6(d-1)}{3 d-5}} L_{x_1}^{2}}\\
& \lesssim \lambda_3^{-\frac16} M_1^{\frac13 + 2 \varepsilon}M_2^{s_c + \frac16-2 \varepsilon} N_1^{-\frac43} \prod_{i=1,2,3} \|u_i \|_{V_S^2}\\
& \lesssim \lambda_3^{\varepsilon} M_2^{s_c + \frac16-2 \varepsilon} N_1^{-\frac76+ \varepsilon} \prod_{i=1,2,3} \|u_i \|_{V_S^2}.
\end{align*}
The other cases are treated similarly.
\end{proof}
As above, Theorem \ref{thm:d4rad} follows by the standard argument.
\bibliographystyle{amsplain}
\bibliography{zkhigh}
\end{document}